\theoremstyle{plain}
\newtheorem{theorem}{Theorem}[section]
\newtheorem{proposition}[theorem]{Proposition}
\newtheorem{lemma}[theorem]{Lemma}
\theoremstyle{definition}
\newtheorem*{definition}{Definition}
\newtheorem{example}[theorem]{Example}
\theoremstyle{remark}
\newtheorem{remark}[theorem]{Remark}
\begin{document}

\title{Coloring links by the symmetric group of degree three}

\author{Kazuhiro Ichihara}
\address{Kazuhiro Ichihara \\ Department of Mathematics \\ College of Humanities and Sciences, Nihon University \\  3-25-40 Sakurajosui, Setagaya-ku, Tokyo 156-8550, Japan}
\email{ichihara.kazuhiro@nihon-u.ac.jp}

\author{Eri Matsudo}
\address{Eri Matsudo \\ The Institute of Natural Sciences \\ Nihon University \\ 3-25-40 Sakurajosui, Setagaya-ku, Tokyo 156-8550, Japan}
\email{matsudo.eri@nihon-u.ac.jp}

\thanks{The first author is partially supported by Grant-in-Aids for Scientific Research (C) (No. 18K03287), The Ministry of Education, Culture, Sports, Science and Technology, Japan.}

\subjclass[2020]{57K10}
\keywords{link, coloring, symmetric group of degree 3}

\begin{abstract}
We consider the number of colors for colorings of links by the symmetric group $S_3$ of degree $3$. 
For knots, such a coloring corresponds to a Fox 3-coloring, and thus the number of colors must be 1 or 3. 
However, for links, there are colorings by $S_3$ with 4 or 5 colors. 
In this paper, we show that if a 2-bridge link admits a coloring by $S_3$ with 5 colors, then the link also admits such a coloring with only 4 colors. 
\end{abstract}

\maketitle

\section{Introduction}

One of the most well-known invariants of knots and links would be the Fox 3-coloring, originally introduced by R.~Fox. 
For example, it is described in \cite[Chap. VI, Exercises, 6, pp.92--93]{CrowellFox1963}. 
In this exercise, readers are asked to show that a knot has a diagram which is 3-colorable if and only if the knot group can be mapped homomorphically onto the symmetric group of degree 3. 
In view of this, as a generalization of the Fox 3-coloring, we consider the colorings of links by the symmetric group of degree $3$, which we denote by $S_3$. 

\begin{definition}
Let $D$ be a diagram of a link. 
We call a map $\Gamma:\{ \mbox{arcs of $D$} \} \rightarrow S_3 \setminus \{e\}$ is called an {\it $S_3$-coloring} on $D$ if it satisfies $\Gamma(x)\Gamma(y)=\Gamma(z)\Gamma(x)$ (respectively, $\Gamma(x)\Gamma(z)=\Gamma(y)\Gamma(x)$) at a positive (resp.~negative) crossing on $D$, 
where $x$ denotes the over arc, $y$ and $z$ the under arcs at the crossing supposing 
$y$ is the under arc before passing through the crossing and $z$ is the other. 
\end{definition}

\begin{figure}[htbt]
\centering
  {\unitlength=1mm
  \begin{picture}(100,20)
   \put(25,0){\includegraphics[width=.45\textwidth]{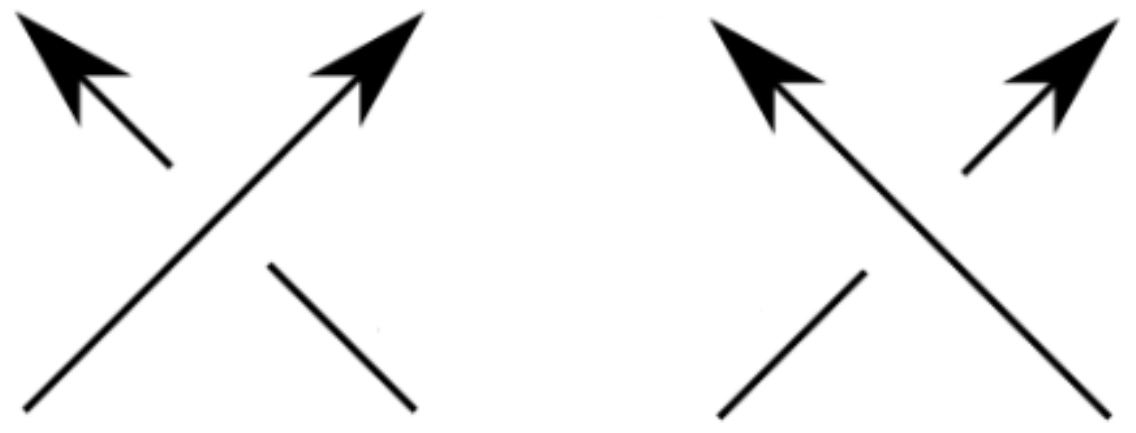}}
   \put(45,21){{\large $x$}}
   \put(78,21){{\large $y$}}
   \put(45,3){{\large $z$}}
   \put(57,3){{\large $z$}}
   \put(24,21.5){{\large $y$}}
   \put(56,21.5){{\large $x$}}
  \end{picture}}
\caption{Crossing conditions for $S_3$-coloring}\label{Fig/condition}
\label{Fig2}
\end{figure}

The image $\Gamma(a)$ of an arc $a$ on $D$ by an $S_3$-coloring $\Gamma$ is said to be a {\it color} on $a$ with respect to $\Gamma$. 

Note that an $S_3$-coloring on a diagram $D$ of a link $L$ gives a representation $G_L \to S_3$ of the link group $G_L = \pi_1(S^3-L)$ of $L$, and conversely, a representation of $G_L$ to $S_3$ gives an $S_3$-coloring on any diagram $D$ of a link $L$. 

Actually, for knots, such an $S_3$-coloring corresponds to a Fox 3-coloring, as stated in \cite[Chap. VI, Exercises, 6, pp.92--93]{CrowellFox1963}. 
Thus the number of colors for such colorings must be 1 or 3. 
However, for links, there exist colorings by $S_3$ with 4 or 5 colors. 
See the example below. (See the next section for details.) 

\begin{figure}[htb]
\centering
\includegraphics[width=\textwidth]{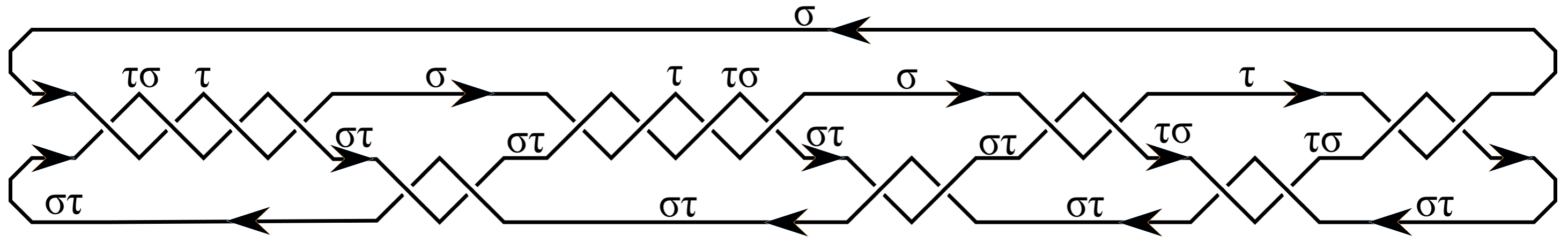}
\caption{a link diagram with an $(S_3,4)$-coloring}\label{2bridge-ex}
\end{figure}

Focusing the number of colors, in this paper, we call an $S_3$-coloring $\Gamma$ an \textit{$(S_3, n)$-coloring} if $\Gamma$ uses $n$ colors for an integer $n \in \{ 1,2,3,4,5 \}$. 
An $(S_3, 1)$-coloring is said to be a \textit{trivial} $S_3$-coloring. 
A link $L$ is said to be \textit{$S_3$-colorable} (resp. \textit{$(S_3,n)$-colorable}) if $L$ has a diagram which admits a non-trivial $S_3$-coloring (resp. an $(S_3,n)$-coloring). 
Then, for links, the following holds. 

\begin{proposition}\label{prop1}
Any $(S_3,4)$-colorable link is also $(S_3,5)$-colorable. 
Precisely, if a link $L$ has a diagram which admits an $S_3$-coloring with 4 colors, then $L$ also has another diagram which admits an $S_3$-coloring with 5 colors. 
\end{proposition}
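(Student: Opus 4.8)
The plan is to exploit the group-theoretic structure of $S_3$ together with a single Reidemeister II move (realized, if necessary, as a finger move) that creates the one missing color without destroying any of the four already present. First I would record the relevant conjugation facts in $S_3 \setminus \{e\}$, whose five elements split into the three transpositions and the two $3$-cycles. Being a conjugacy invariant, the type (transposition versus $3$-cycle) of the color on an arc is preserved as one travels along a component, since the coloring rule replaces the color of an under-arc by its conjugate by the over-arc's color. Moreover, conjugating a $3$-cycle by a transposition interchanges the two $3$-cycles, while conjugating a transposition by a $3$-cycle cyclically permutes the three transpositions with no fixed point.

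Next I would observe that a $4$-coloring omits exactly one of the five colors, so it is of one of two types: three transpositions and one $3$-cycle, or two transpositions and two $3$-cycles. In either case both a transposition-colored arc and a $3$-cycle-colored arc occur in $D$, and the omitted color is respectively a $3$-cycle or a transposition. This is the input for the local modification.

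The main step is the modification. Suppose first the missing color is the $3$-cycle $\sigma^{-1}$, so that $\sigma$ is present. I would take the $3$-cycle-colored arc $b$ (color $\sigma$) and push it underneath a transposition-colored arc $a$ (color $\tau$) by a Reidemeister II move, so that $b$ passes under $a$ at two consecutive crossings. The middle segment of $b$ then receives the color $\tau \sigma \tau^{-1} = \sigma^{-1}$, exactly the missing $3$-cycle, while $a$ keeps its color and no existing color is lost. If instead the missing color is a transposition $\tau_3$, I would push a transposition-colored arc $a$ (color $\tau_1$) under a $3$-cycle-colored arc $b$; the middle segment acquires $\sigma^{\pm 1}\tau_1 \sigma^{\mp 1}$, and since conjugation by $\sigma$ cycles the three transpositions without fixing any, one of the two choices of $3$-cycle $\sigma$ or $\sigma^{-1}$ to conjugate by yields exactly $\tau_3$. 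In both cases the resulting diagram $D'$ carries an $S_3$-coloring using all five colors.

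The hard part, and the point to make rigorous, is that the two arcs $a$ and $b$ need not be adjacent in $D$, so the single Reidemeister II move must be preceded by a finger move bringing a strand of one arc next to the other. I would realize this finger move as a sequence of Reidemeister II moves in which the travelling strand always passes over the arcs it crosses; then at each such move the travelling strand keeps its color and the crossed arc is conjugated and immediately un-conjugated, so every previously present color survives (any extra colors created along the way are harmless). Because the complement of the diagram in the plane is path-connected, such a route always exists, and because Reidemeister II preserves the link type, $D'$ is a diagram of the same link $L$. Combining this transit with the final color-creating move of the previous paragraph shows that $L$ admits a diagram with an $(S_3,5)$-coloring, proving the proposition.
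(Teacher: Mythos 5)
Your proof is correct, and its central device is the same as the paper's: the missing fifth color is produced as a conjugate of colors already present, by pushing one arc under a suitably colored arc via a Reidemeister~II finger move whose travelling strand passes over everything en route, so that no existing color is destroyed. The differences are in the organization, and they are genuine improvements. The paper splits into the non-splittable case, where its Lemma~\ref{lem}~(i) forces the four colors to be two transpositions and two $3$-cycles (so the missing color is a transposition, created by conjugating one present transposition by the other), and the splittable case with three transpositions and one $3$-cycle (missing $3$-cycle created by conjugation by a transposition); in both cases it then cites Lemma~\ref{lem}~(ii) to conclude that five colors actually appear. You instead decompose directly by the conjugacy-class distribution of the four colors, $(3,1)$ or $(2,2)$, which treats splittable and non-splittable links uniformly, and you verify the five-color count by hand: the pushed arc keeps its color on its end segments, each arc crossed over by the finger is conjugated and immediately un-conjugated, so all four old colors survive and the tip contributes the fifth. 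This buys you two things: you never need Lemma~\ref{lem} at all, and you sidestep a weak point in the paper's own write-up, namely that Lemma~\ref{lem} is stated only for non-splittable links, yet the paper invokes part~(ii) of it in its splittable case. (In the $(2,2)$ case you conjugate a transposition by a $3$-cycle where the paper conjugates one transposition by the other; both work, since both achievable conjugates $\sigma^{\pm1}\tau_1\sigma^{\mp1}$ are distinct from $\tau_1$ and from each other.) One small slip in wording: the finger's route exists not because ``the complement of the diagram in the plane is path-connected'' --- it is not, since the diagram separates the plane --- but simply because Reidemeister~II moves allow the finger to cross over any arc in its way; as that is exactly the mechanism you describe, the argument itself stands.
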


On the other hand, one can ask if the converse does hold: Is an $(S_3,5)$-colorable link always $(S_3,4)$-colorable? 
It seems to expect too much naively, but there are some results on the Fox coloring related to this question. 
For example, it is known that if a knot $K$ is Fox 5-colorable, then $K$ has a diagram which admits a Fox 5-coloring with only 4 colors \cite{Satoh}. 
Also the second author \cite{Matsudo} and independently M. Zhang, X. Jin\ and\ Q. Deng \cite{ZhangJinDeng} proved that if a link $L$ is $\mathbb{Z}$-colorable, then $L$ has a diagram which admits a $\mathbb{Z}$-coloring with only 4 colors. 

About the question above, in this paper, we obtain the following for  $2$-bridge links. 

\begin{theorem}\label{main}
Any $(S_3,5)$-colorable $2$-bridge link $L$ is $(S_3,4)$-colorable. 
\end{theorem}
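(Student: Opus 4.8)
My plan is to first pin down the exact structure of any $5$-coloring, then show that the only way to save a color is to recolor the transposition component, and finally to realize this on the standard $2$-bridge diagram.

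I would begin by analyzing the conjugacy structure. The nonidentity elements of $S_3$ split into the transpositions $T=\{(12),(13),(23)\}$ and the $3$-cycles $C=\{(123),(132)\}$, and at every crossing the two under-arc colors are conjugate to one another by the over-arc color. Hence, tracing along a single component of the link, all arcs carry colors in one fixed conjugacy class: each component is either entirely $T$-colored or entirely $C$-colored. A $T$-colored component shows at most $3$ colors and a $C$-colored one at most $2$, and a knot admits only Fox $3$-colorings (so $1$ or $3$ colors). Therefore an $(S_3,5)$-colorable $2$-bridge link $L$ must have exactly two components, say $K_1$ and $K_2$, with $K_1$ displaying all three transpositions and $K_2$ displaying both $3$-cycles; in particular the associated homomorphism $\pi_1(S^3\setminus L)\to S_3$ is onto. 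I will also use the standard fact that each component of a $2$-bridge link is unknotted.

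Next I would identify what can and cannot be reduced. Conjugating a $3$-cycle by a transposition interchanges $(123)$ and $(132)$, whereas conjugating a $3$-cycle by a $3$-cycle fixes it; thus the color of $K_2$ changes exactly when $K_2$ passes under an arc of $K_1$. Since a $2$-bridge link is nonsplit, every diagram of $L$ has at least one crossing at which $K_1$ passes over $K_2$ (otherwise $K_1$ could be pushed off), so both $3$-cycles necessarily occur in \emph{every} diagram. Consequently the count cannot be lowered through $K_2$, and the only route to $4$ colors is to produce a diagram on which the arcs of $K_1$ realize only two of the three transpositions, together with the two unavoidable $3$-cycles. The theorem thus reduces to the claim that $L$ has a diagram on which the transposition-colored component uses exactly two colors.

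For the construction I would identify $T$ with $\mathbb{Z}/3$ so that conjugation by a transposition becomes the Fox rule $z\equiv 2x-y \pmod 3$ and conjugation by a $3$-cycle becomes a shift $x\mapsto x\pm 1$, the sign being governed by which $3$-cycle is the over-arc. Isotoping the unknotted component $K_1$ to a round circle deletes its self-crossings, so its label is locally constant and changes only by a $\pm1$ shift at each under-crossing beneath $K_2$; confining the label to two values then amounts to arranging that the successive shifts cancel in pairs. I would carry this out on the standard alternating continued-fraction diagram $C(a_1,\dots,a_m)$, propagating the coloring through the twist regions and arguing by induction on $m$ (and on the $a_i$) that the transposition labels stay inside a two-element subset of $\mathbb{Z}/3$, repairing any offending twist region by a flype or a Reidemeister~II/III move, and finally checking that both $3$-cycles and both chosen transpositions still appear so that the count is exactly $4$. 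The main obstacle is precisely this last step: the $T$-labels of $K_1$ and the $C$-labels of $K_2$ are coupled through the shared inter-component crossings, so the shift signs controlling $K_1$ are dictated by the $3$-cycle pattern of $K_2$. Proving uniformly — across all continued-fraction types and all parities of the $a_i$ — that these shifts can always be made to cancel in pairs, equivalently that the third transposition is never forced, is the delicate combinatorial heart of the argument and is where I expect the bulk of the work, and the genuine use of the $2$-bridge hypothesis, to lie.
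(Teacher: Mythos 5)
Your structural analysis is sound and reproduces the paper's preparatory lemmas: each component carries colors from a single conjugacy class, a $5$-coloring forces one component to show all three transpositions and the other both $3$-cycles, nonsplittability makes both $3$-cycles unavoidable in every diagram, and hence the theorem reduces to exhibiting a diagram on which the transposition-colored component uses only two colors. But your proof stops exactly where it needed to begin. You explicitly defer the core step --- showing that, across all continued-fraction types, the $\pm 1$ shifts on the transposition labels can be made to cancel so that the third transposition is never forced --- calling it ``the delicate combinatorial heart'' and offering only a plan (induction over twist regions, to be ``repaired'' by flypes or Reidemeister~II/III moves). Nothing in the proposal shows that such repair moves exist, that the induction closes, or that the result still displays four colors rather than collapsing to fewer. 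As written this is a genuine gap, not a detail to be routinely filled in; moreover the outline wavers between two incompatible set-ups (isotoping one component to a round circle versus working on an alternating diagram $C(a_1,\dots,a_m)$ with entries of arbitrary parity), and in the second set-up the propagation genuinely can force all three transpositions, so some normalization is indispensable.

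What dissolves the difficulty --- and what the paper actually does --- is choosing the right normal form \emph{before} propagating colors. A two-component $2$-bridge link always has a Conway diagram $C(2a_1,2b_1,\dots,2b_m,2a_{m+1})$ with all entries even. On this diagram an $S_3$-coloring is completely determined by the colors of the two left-end arcs (and after Lemma~\ref{lem}, possibly composing with the inside-out rotation of Figure~\ref{pr3}, one may assume the top-left arc gets a transposition and the bottom-left arc a $3$-cycle). The forced propagation never produces a fifth color: in each $2a_i$ region the pair of colors stays inside $\{\sigma,\tau,\sigma\tau,\tau\sigma\}$, the pairs before and after the region agreeing or swapping according to the parity of $a_i$, and in each $2b_j$ region only the two $3$-cycles occur. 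There are no choices to make and no cancellations to arrange --- evenness of the twists does that automatically. Closure at the right end (equivalently $\sum_i|a_i|\equiv 0\pmod 2$) is automatic as well, because the coloring under consideration is the one induced from your given $5$-coloring by Reidemeister moves; and Lemma~\ref{lem}(ii) guarantees this induced coloring still has at least $4$ colors, while the propagation bounds it by $4$. So the combinatorial work you anticipated as the bulk of the proof is precisely what the even Conway form eliminates; without that device (or an equivalent one) your outline cannot be completed as stated.
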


In the next section, we describe the local behavior of $S_3$-colorings on links preparing lemmas. 
Then, in Section 3, we give a proof of Theorem~\ref{main}. 
By Theorem~\ref{main}, all the $(S_3, 5)$-colorable 2-bridge links are $(S_3, 4)$-colorable. 
Some of them actually are also $(S_3, 3)$-colorable, but some others are not. 
In the last section, among 2-bridge links, we determine the double twist links and the torus links that are $(S_3, 4)$-colorable but not $(S_3, 3)$-colorable.

\section{Local behavior of $S_3$-colorings}

Throughout the paper, we set a presentation of $S_3$ as $\langle \sigma, \tau \mid \sigma^2= \tau^2 = e , \sigma\tau\sigma=\tau\sigma\tau \rangle$, where $e$ denotes the identity element of $S_3$. 
Then, note that $S_3 = \{ e, \sigma,\tau,\sigma\tau\sigma,\sigma\tau,\tau\sigma \}$ as a set. 

In this section, we observe the local behavior of $S_3$-colorings on links, and prepare some lemmas used in the remaining sections. 

Let $L$ be a link with a diagram $D$. 
Suppose that $D$ admits a non-trivial $S_3$-coloring $\Gamma$. 
At a crossing of $D$, let $x$ denote the over arc, $y$ and $z$ the under arcs at the crossing supposing 
$y$ is the under arc before passing through the crossing and $z$ is the other. 
See Figure~\ref{Fig/condition}. 
Then the possible colors of the arcs $x$, $y$, $z$ assigned by $\Gamma$ can be summarized in the following table. 

\begin{table}[htb]
\caption{Colors on $y$ when the colors on $x$ and $z$ are assigned.}\label{table1}
\begin{tabular}{|c||c|c|c|c|c|}
\hline
\diagbox{Color on $x$}{Color on $z$} & $\sigma$ & $\tau$ & $\sigma\tau\sigma$ & $\sigma\tau$ & $\tau\sigma$ \\
\hline\hline
$\sigma$ & $\sigma$ & $\sigma\tau\sigma$ & $\tau$ & $\tau\sigma$ & $\sigma\tau$ \\
\hline
$\tau$ & $\sigma\tau\sigma$ & $\tau$ & $\sigma$ & $\tau\sigma$ & $\sigma\tau$ \\
\hline
$\sigma\tau\sigma$ & $\tau$ & $\sigma$ & $\sigma\tau\sigma$ & $\tau\sigma$ & $\sigma\tau$ \\
\hline
$\sigma\tau$ & {\small \diagbox[dir=NE]{$\sigma\tau\sigma$}{$\tau$} } & {\small \diagbox[dir=NE]{$\sigma$}{$\sigma\tau\sigma$} } & {\small \diagbox[dir=NE]{$\tau$}{$\sigma$} } & $\sigma\tau$ & $\tau\sigma$ \\
\hline
$\tau\sigma$ & {\small \diagbox[dir=NE]{$\tau$}{$\sigma\tau\sigma$} } & {\small \diagbox[dir=NE]{$\sigma\tau\sigma$}{$\sigma$} } & {\small \diagbox[dir=NE]{$\sigma$}{$\tau$} } & $\sigma\tau$ & $\tau\sigma$ \\
\hline
\end{tabular}
\end{table}

In the table above, 
\begin{tabular}{|c|}
\hline
{\small \diagbox[dir=NE]{$\alpha$}{$\beta$} }\\
\hline
\end{tabular}
means that the color on $y$ is $\alpha$ (resp. $\beta$ ) if the crossing is positive (resp. negative). 

\begin{remark}
Also, from Table~\ref{table1}, we see that any link with at least 2 components admits an $S_3$-coroling with 2 colors $\{ \sigma\tau, \tau\sigma\}$. 
See Figure~\ref{2-col} for example. 

\begin{figure}[htb]
\centering
\includegraphics[width=.4\textwidth]{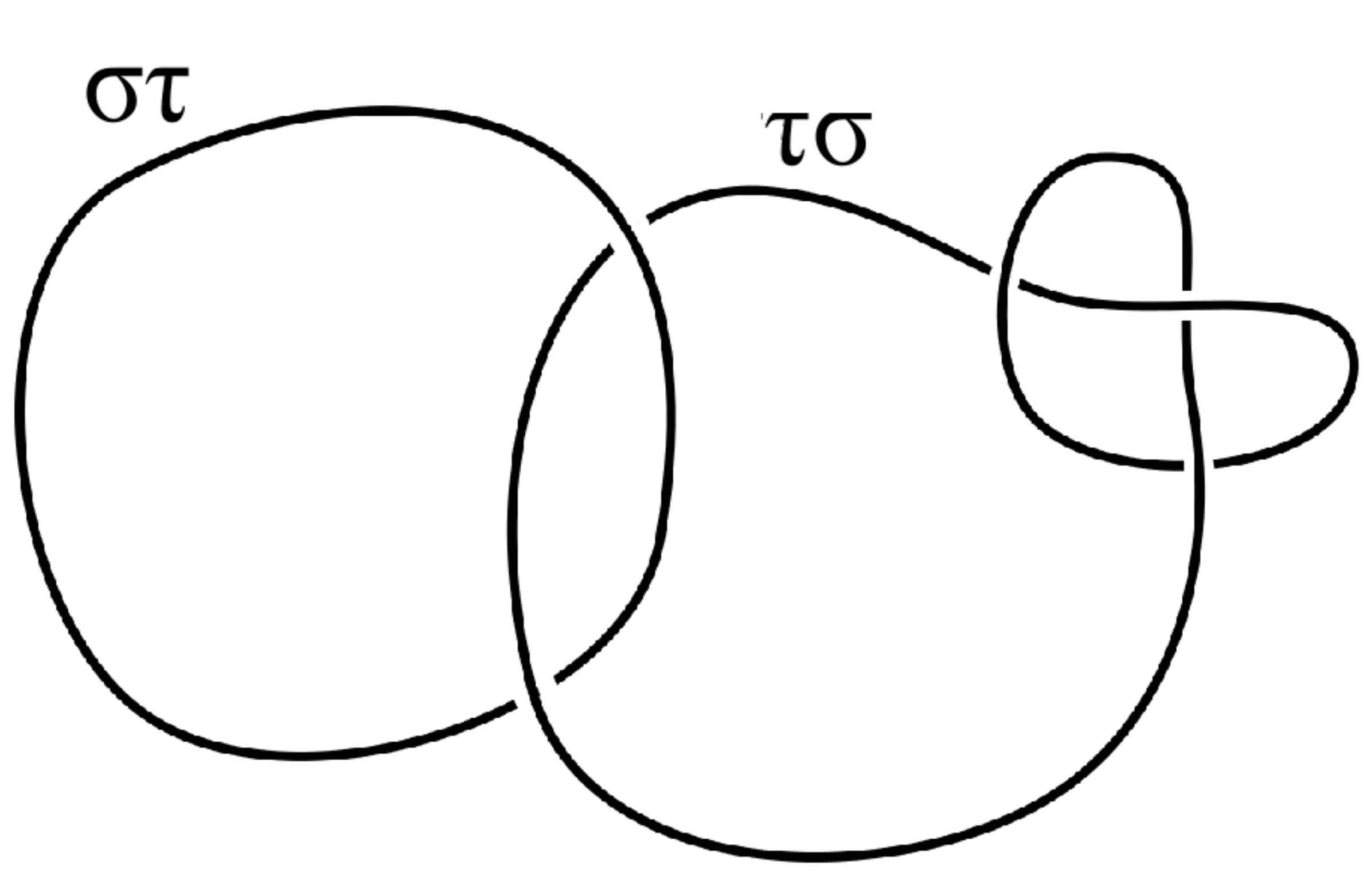}
\caption{An $S_3$-colored link with 2 colors}\label{2-col}
\end{figure}
\end{remark}

The next is our fundamental lemma, which we will use implicitly and repeatedly. 
It follows from Table~\ref{table1}. 

\begin{lemma}\label{lem1}
Let $\Gamma$ be an $S_3$-coloring on a diagram $D$ of a link $L$. 
Then the set of the colors on arcs of $D$ corresponding to one component of $L$ are either a subset of $\{ \sigma, \tau, \sigma \tau \sigma\}$ or a subset of $\{ \sigma\tau , \tau\sigma\}$. 
The set $\{ \sigma, \tau, \sigma \tau \sigma\}$ or $\{ \sigma\tau , \tau\sigma\}$ for a component of $L$ is unchanged by modifying the diagram and the coloring by Reidemeister moves. 
\end{lemma}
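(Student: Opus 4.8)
The plan is to identify the two sets in the statement with the two nontrivial conjugacy classes of $S_3$, and then to exploit the fact that, along a single component, the color can change only by conjugation. Recall from the chosen presentation that $\{\sigma,\tau,\sigma\tau\sigma\}$ is exactly the class of transpositions (the order-$2$ elements) while $\{\sigma\tau,\tau\sigma\}$ is exactly the class of $3$-cycles (the order-$3$ elements), and that these are the only two nontrivial conjugacy classes of $S_3$. I would begin by rewriting the crossing condition of the definition in conjugation form: at a positive crossing $\Gamma(z)=\Gamma(x)\Gamma(y)\Gamma(x)^{-1}$, and at a negative crossing $\Gamma(y)=\Gamma(x)\Gamma(z)\Gamma(x)^{-1}$. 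In either case the two under-arc colors $\Gamma(y)$ and $\Gamma(z)$ are conjugate in $S_3$, hence have the same cycle type. This is precisely the pattern visible in Table~\ref{table1}: in every cell the possible color for $y$ lies in the same one of the two sets as the color prescribed for $z$.

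For the first assertion, I would traverse a chosen component of $L$ following the orientation of the diagram. Its color is locally constant except at the undercrossings it passes through, where the incoming under arc $y$ and the outgoing under arc $z$ meet; by the previous paragraph these two colors are conjugate, hence lie in the same conjugacy class. Since a component is connected, any two of its arcs are joined by such a chain of undercrossings, so all colors appearing on that component lie in a single class, which is therefore one of $\{\sigma,\tau,\sigma\tau\sigma\}$ or $\{\sigma\tau,\tau\sigma\}$.

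For the second assertion, the cleanest route is the representation-theoretic one already set up in the excerpt: the coloring $\Gamma$ determines a representation $\rho\colon G_L\to S_3$ with $\rho(\text{meridian of }a)=\Gamma(a)$, and a Reidemeister move replaces $\Gamma$ by the coloring of the new diagram induced by the \emph{same} $\rho$. All meridians of a fixed component are conjugate in $G_L$, so their $\rho$-images are conjugate in $S_3$; thus the conjugacy class attached to that component depends only on $\rho$ and the component, not on the diagram, and is consequently preserved. As an alternative one can argue diagram-locally: by the first assertion the colors on a component form a single class both before and after a move, and since the colors on arcs lying away from the move are unchanged, the class cannot jump to the other one.

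The point requiring the most care is the second assertion, where one must rule out the possibility that creating or deleting arcs during a Reidemeister move drags a component into the other class; the local bookkeeping is slightly delicate in degenerate cases (for instance an $R1$ move on a component carrying very few arcs), which is exactly why I would lead with the representation viewpoint, since there the relevant invariant is the conjugacy class of a meridian and is manifestly intrinsic to the component. The first assertion, by contrast, is essentially a direct inspection of Table~\ref{table1} combined with the connectivity of a link component, and should present no real obstacle.
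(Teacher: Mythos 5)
Your proposal is correct, but it follows a genuinely different route from the paper's own proof --- in fact, precisely the route the paper mentions but declines to take in the remark immediately after the lemma (``this lemma can be derived from considering the conjugacy classes of $S_3$ or the conjugate quandle structure of $S_3$''). The paper argues diagrammatically: the first assertion is read off directly from Table~\ref{table1} (if one under arc at a crossing carries a color from one of the two sets, so does the other), and the second is dispatched by asserting that one can check the local behavior of colorings under each Reidemeister move. You instead derive the table's pattern conceptually, rewriting the crossing relations as $\Gamma(z)=\Gamma(x)\Gamma(y)\Gamma(x)^{-1}$ (positive) and $\Gamma(y)=\Gamma(x)\Gamma(z)\Gamma(x)^{-1}$ (negative), which identifies the two sets with the conjugacy classes of transpositions and of $3$-cycles; and for Reidemeister invariance you pass to the induced representation $\rho\colon G_L\to S_3$ and use that all meridians of a fixed component are conjugate in $G_L$, so the class of their images is intrinsic to $\rho$ and the component, independent of the diagram. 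What your approach buys: the second assertion becomes structural rather than a case-by-case verification, and it is immune to the degenerate-case bookkeeping you rightly flag. What the paper's approach buys: it stays elementary and self-contained at the level of the already-computed table, with no appeal to the Wirtinger presentation or to conjugacy of meridians in the link group. Your fallback diagram-local argument is also sound, and the degenerate case you worry about is in fact vacuous: the strands inside a Reidemeister-move disk are properly embedded arcs, never closed circles, so every component meeting the disk also has arcs outside it whose colors are untouched by the move; the first assertion then pins down the class on both sides.
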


\begin{proof}
From Table~\ref{table1}, if one of the under arcs at a crossing of a link diagram is colored by one of $\{ \sigma, \tau, \sigma \tau \sigma\}$ or one of $\{ \sigma\tau , \tau\sigma\}$ by an $S_3$-coloring, then the other under arc is also. 
Thus the first statement holds. 
One can check the local behavior of $S_3$-colorings by Reidemeister moves to keep the set of colors on the related arcs. 
This implies the second statement. 
\end{proof}

We remark that this lemma can be derived from considering the conjugacy classes of $S_3$ or the conjugate quandle structure of $S_3$. 

For a diagram $D$ of a knot,  there is a one-to-one correspondence between a non-trivial Fox $3$-coloring on a diagram $D$ and an $(S_3,3)$-coloring on $D$ as follows. 

\begin{lemma}\label{lem2}
(i) For a non-splittable $(S_3,3)$-colorable link $L$, the set of colors for an $(S_3,3)$-coloring on a diagram of $L$ is $\{ \sigma, \tau, \sigma \tau \sigma\}$. 
(ii) For a knot $K$,  there is a one-to-one correspondence between a Fox $3$-coloring on a diagram $D$ and an $S_3$-coloring on $D$ of $K$. 
Thus a knot $K$ is $S_3$-colorable if and only if $K$ is Fox 3-colorable. 
In particular, if a knot is $(S_3,n)$-colorable, then $n = 1 \mbox{ or } 3$. 
\end{lemma}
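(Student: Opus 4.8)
The plan is to handle the two parts separately, in both cases using Lemma~\ref{lem1} to partition the components of $L$ according to the two conjugacy classes $T=\{\sigma,\tau,\sigma\tau\sigma\}$ (the transpositions) and $R=\{\sigma\tau,\tau\sigma\}$ (the $3$-cycles) of $S_3\setminus\{e\}$. For part (i), I would first observe that $R$ supplies only two colors, so an $(S_3,3)$-coloring cannot be carried entirely by $R$-colored components; hence at least one $T$-color occurs, and the three colors are distributed between $T$ and $R$ as $(3,0)$, $(2,1)$, or $(1,2)$. The goal is to exclude the two mixed distributions, and this is exactly where non-splittability will be used.

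The key local fact, read off from Table~\ref{table1} (equivalently from $\Gamma(z)=\Gamma(x)\Gamma(y)\Gamma(x)^{-1}$), is how conjugation moves colors between the classes: if the over-arc lies in $T$ and the under-strand in $R$, then the two under-arcs receive the two \emph{distinct} colors of $R$; if the over-arc lies in $R$ and the under-strand in $T$, then the two under-arcs receive two \emph{distinct} colors of $T$. Consequently, in case $(2,1)$ (only one $R$-color available) no $T$-arc can over-cross an $R$-arc, and in case $(1,2)$ (only one $T$-color available) no $R$-arc can over-cross a $T$-arc. In either mixed case, one of the two sublinks must over-cross the other at every crossing between them.

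I would then invoke the elementary fact that a diagram in which one sublink over-crosses the complementary sublink at all of their mutual crossings represents a split link, since the always-over sublink can be isotoped off to one side. As $L$ is non-splittable and both sublinks are non-empty in a mixed coloring, cases $(2,1)$ and $(1,2)$ are impossible, leaving $(3,0)$, i.e. the color set is $\{\sigma,\tau,\sigma\tau\sigma\}$. I expect this splitting step to be the only non-formal point in part (i); everything else is bookkeeping with Table~\ref{table1}.

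For part (ii), a knot has a single component, so Lemma~\ref{lem1} forces all arcs into one class. If they lie in $R$, then, since $\sigma\tau$ and $\tau\sigma$ commute, the crossing relation gives $\Gamma(y)=\Gamma(z)$ at every crossing; the color is therefore constant along the knot and the coloring is trivial. Hence every non-trivial $S_3$-coloring of a knot is $T$-valued. On $T$, conjugation realizes the dihedral quandle of order three: under the labeling $\sigma\mapsto 0$, $\tau\mapsto 1$, $\sigma\tau\sigma\mapsto 2$ of $\mathbb{Z}/3$, one checks on the finitely many cases that $\Gamma(x)\Gamma(y)\Gamma(x)^{-1}$ corresponds to $2x-y\pmod 3$, which is precisely the Fox crossing relation. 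This produces a one-to-one correspondence between non-trivial Fox $3$-colorings and non-trivial $S_3$-colorings of $D$ (each such being $T$-valued), whence $K$ is $S_3$-colorable if and only if it is Fox $3$-colorable. Finally, for $n\in\{1,3\}$ I would argue that a non-trivial Fox $3$-coloring of a knot must use all three colors, since with only two colors no crossing could be tricolored, forcing every crossing and hence the whole coloring to be monochromatic; transporting this through the correspondence shows a non-trivial $S_3$-coloring uses exactly the three colors of $T$, while a trivial one uses one. The only genuine computational content is the verification of the quandle identification, which is routine.
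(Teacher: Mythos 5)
Your proposal is correct and follows essentially the same route as the paper's proof: both parts rest on Lemma~\ref{lem1}'s conjugacy-class dichotomy, part (i) excludes the mixed color distributions by showing one sublink would lie entirely over the other (hence $L$ would be split), and part (ii) reduces a knot's nontrivial coloring to the transposition class and identifies it with a Fox $3$-coloring by relabeling. Your explicit dihedral-quandle formula $2x-y \pmod 3$ is just a more concrete phrasing of the paper's ``replace $\{\sigma,\tau,\sigma\tau\sigma\}$ by $\{0,1,2\}$ and check Table~\ref{table1}'' step.
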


\begin{proof}
(i) Suppose that a diagram $D$ of a link $L$ admits an $(S_3,3)$-coloring $\Gamma$. 
From Lemma~\ref{lem1}, the set of colors on each component of the link are either of $\{ \sigma, \tau, \sigma \tau \sigma\}$ or $\{ \sigma\tau , \tau\sigma\}$. 
Let $\alpha,\beta,\gamma$ be the three colors used by $\Gamma$. 
If $\alpha\in\{\sigma,\tau,\sigma\tau\sigma\}$ and $\beta,\gamma\in\{\sigma\tau,\tau\sigma\}$, then, by Table~\ref{table1}, the arc colored by $\alpha$ is constantly an over arc, or an under arc at the crossing with the over arc colored by $\alpha$, a contradiction. 
Thus the component with an arc colored by $\alpha$ is splittable from the other components, implying that $L$ is splittable. 
Similarly, the same argument applies for the case $\alpha\in\{\sigma\tau,\tau\sigma\}$ and $\beta,\gamma\in\{\sigma,\tau,\sigma\tau\sigma\}$. 
Thus, if $L$ is non-splittable, the set of 3 colors for $\Gamma$ is $\{ \sigma, \tau, \sigma \tau \sigma\}$. 

\noindent
(ii) Suppose that $K$ is Fox 3-colorable, i.e., $K$ has a diagram $D$ of a knot $K$ admits a non-trivial $S_3$-coloring. 
Then, by Lemma~\ref{lem1}, the set of colors appearing are either from $\{ \sigma, \tau, \sigma \tau \sigma\}$ or from $\{ \sigma\tau , \tau\sigma\}$. 
If an arc on $D$ could have a color from $\{ \sigma\tau , \tau\sigma\}$, since $K$ has only one component, then, by Table~\ref{table1}, the coloring uses only one color on $D$, that is, the coloring is trivial, contradicting $\Gamma$ is non-trivial. 
It follows that the set of colors for the coloring must be from $\{ \sigma, \tau, \sigma \tau \sigma\}$. 
In this case, seeing Table~\ref{table1},  we note that all the 3 colors appear or only single color appears at each of the crossings of $D$. 
Thus, if the coloring $\Gamma$ is non-trivial, then $\Gamma$ must use 3 colors. 
By replacing the colors $\{ \sigma, \tau, \sigma \tau \sigma \}$ to $\{ 0 , 1, 2 \}$, we can verify by Table~\ref{table1} that a Fox $3$-coloring on $D$  can be obtained from $\Gamma$. 
Conversely, one can obtain an $S_3$-coloring from a Fox $3$-coloring on a knot diagram by setting the colors $\{ 0 , 1, 2 \}$ to the colors $\{ \sigma, \tau, \sigma \tau \sigma \}$. 
See Table~\ref{table1} again. 
\end{proof}

\begin{remark}
For splittable links, the lemma above does not hold. 
See Figure~\ref{split-3col} for example. 
\end{remark}

\begin{figure}[htb]
\centering
\includegraphics[width=.4\textwidth]{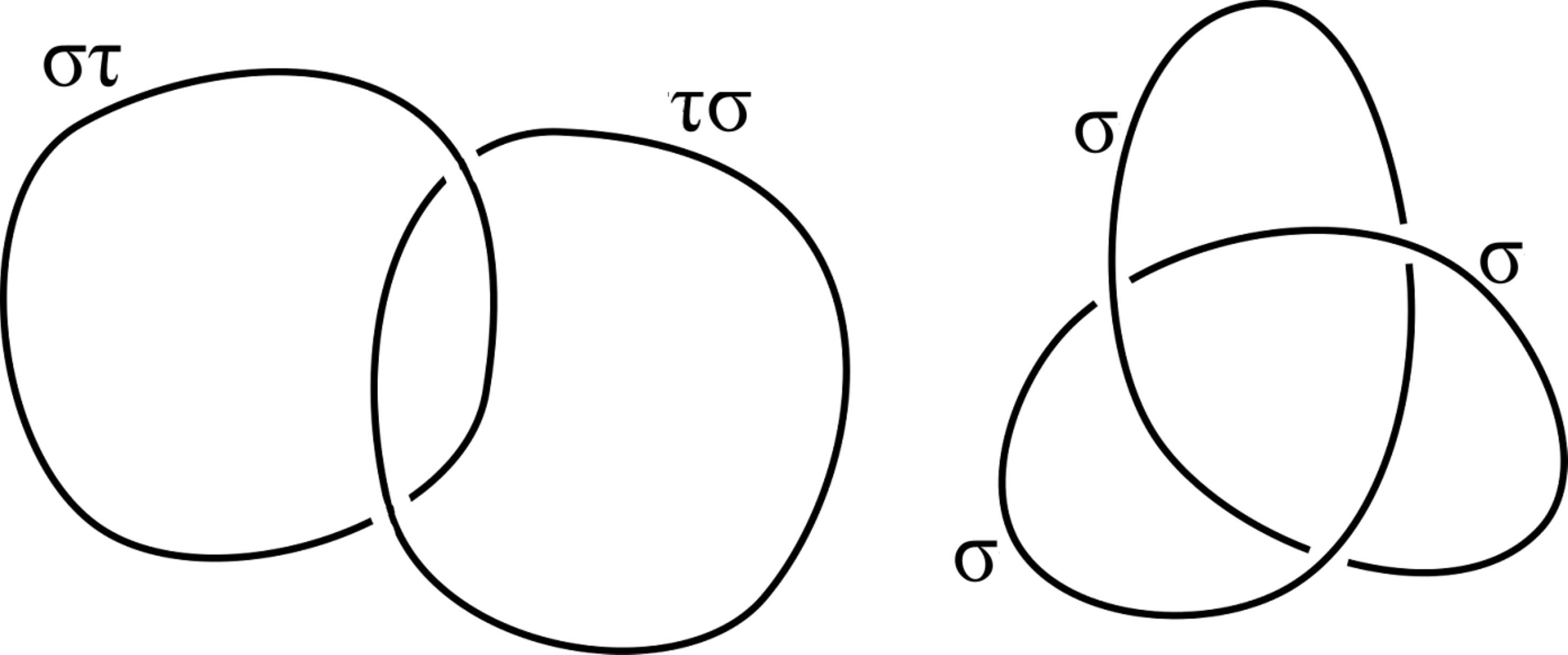}
\caption{An $(S_3,3)$-coloring on a splittable 2-component link with the three colors $\{ \sigma, \sigma\tau, \tau\sigma \}$}\label{split-3col}
\end{figure}

From Lemma~\ref{lem2}, a knot $K$ is $S_3$-colorable if and only if $K$ is Fox 3-colorable. 
In particular, if a knot is $(S_3,n)$-colorable, then $n= 1 \mbox { or } 3$. 

On the other hand, if a link $L$ has at least 2 components, then $L$ can be $(S_3,n)$-colorable with $n \ge4$, as illustlated in Figure~\ref{2bridge-ex} for an example. 

For such $S_3$-colorings with 4 or 5 colors, 
we have the following. 

\begin{lemma}\label{lem}
Let $L$ be a non-splittable link and $D$ a diagram of $L$.
Suppose that $D$ admits an $(S_3,4)$-coloring or an $(S_3,5)$-coloring, say $\Gamma$.
(i) The set of colors of \,$\Gamma$ contains at least $2$ colors from $\{\sigma,\tau,\sigma\tau\sigma\}$ and $2$ colors from $\{\sigma\tau,\tau\sigma\}$. 
(ii) The $S_3$-coloring induced from $\Gamma$ on a diagram of $L$ obtained by Reidemeister moves from $D$ has at least $4$ colors. 
\end{lemma}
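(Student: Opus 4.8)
The plan is to isolate a single combinatorial claim from which both (i) and (ii) follow at once. Write $A=\{\sigma,\tau,\sigma\tau\sigma\}$ for the three transpositions and $B=\{\sigma\tau,\tau\sigma\}$ for the two $3$-cycles, so that $S_3\setminus\{e\}=A\sqcup B$ with $|A|=3$ and $|B|=2$. By Lemma~\ref{lem1} each component of $L$ is colored entirely within $A$ (an \emph{$A$-component}) or entirely within $B$ (a \emph{$B$-component}); write $L_A$ and $L_B$ for the corresponding sublinks. The key claim I would establish is: \emph{if an $S_3$-coloring of a non-splittable link uses at least one color from $A$ and at least one color from $B$, then it uses at least two colors from each of $A$ and $B$.}

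To prove the claim, I would examine a crossing between an $A$-component and a $B$-component; by Lemma~\ref{lem1} exactly one of $L_A$, $L_B$ is the over-strand there. First I would read off Table~\ref{table1}: when the over arc $x$ lies in $A$ and the under arcs lie in $B$, the two under arcs receive the two \emph{different} colors $\sigma\tau$ and $\tau\sigma$ (checking the three rows of $A$ against the columns $\sigma\tau,\tau\sigma$); and when $x$ lies in $B$ and the under arcs lie in $A$, the two under arcs again receive \emph{different} colors of $A$ (here the entries are sign-dependent, so I would verify that for both signs and both choices $x\in B$ one has $y\neq z$). Consequently, if only one color of $B$ were used, no $A$-over-$B$ crossing could occur, so every $A$--$B$ crossing would have $L_B$ over $L_A$; and if only one color of $A$ were used, no $B$-over-$A$ crossing could occur, so every $A$--$B$ crossing would have $L_A$ over $L_B$. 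In either case one sublink lies over the other at \emph{all} of their mutual crossings, so it can be lifted off, exhibiting $L$ as splittable. Since both $L_A$ and $L_B$ are nonempty by hypothesis, this contradicts non-splittability and proves the claim.

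Part (i) then follows by counting: a coloring with $4$ or $5$ colors cannot be contained in $A$ (only $3$ colors) nor in $B$ (only $2$ colors), so it uses colors from both, and the claim upgrades this to at least $2$ colors from each. For part (ii), let $D'$ be a diagram obtained from $D$ by Reidemeister moves, with induced coloring $\Gamma'$. Since $\Gamma$ uses $4$ or $5$ colors, $L$ has both an $A$-component and a $B$-component; by Lemma~\ref{lem1} these keep their types under Reidemeister moves, so $\Gamma'$ still uses at least one color from each of $A$ and $B$. Applying the claim to $\Gamma'$ yields at least $2$ colors from each, hence at least $4$ colors in total.

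I expect the main obstacle to be the geometric separation step, turning ``one sublink is the over-strand at every mutual crossing'' into an actual splitting of $L$; I would justify it by the standard height-function argument (raising the over-sublink entirely above the diagram plane while lowering the under-sublink below it, so that a level sphere separates them), together with a careful case-by-case reading of the sign-dependent (split) entries of Table~\ref{table1} to confirm that the forced distinctness $y\neq z$ holds for both crossing signs.
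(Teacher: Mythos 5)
Your proof is correct. The core mechanism---reading from Table~\ref{table1} that a crossing between the two classes forces the two under arcs to carry \emph{distinct} colors of the same class, so that using only one color from $\{\sigma\tau,\tau\sigma\}$ (or only one from $\{\sigma,\tau,\sigma\tau\sigma\}$) forces one sublink to pass over the other at every mutual crossing, exhibiting $L$ as splittable---is exactly the paper's argument for part (i); the paper only needs the one-color-from-$\{\sigma\tau,\tau\sigma\}$ case there, since four or five colors automatically supply two colors from the three-element class. Where you genuinely depart from the paper is part (ii): you package the mechanism as a reusable claim (``at least one color from each class plus non-splittability implies at least two from each'') and re-apply it to the induced coloring $\Gamma'$, whereas the paper argues that non-splittability forces a mixed crossing, which by Table~\ref{table1} forces a third color, and then invokes Lemma~\ref{lem2}(i)---that an $(S_3,3)$-coloring of a non-splittable link has color set exactly $\{\sigma,\tau,\sigma\tau\sigma\}$---to exclude the possibility of exactly three colors. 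Your route is more self-contained (it never uses Lemma~\ref{lem2}) and gives the slightly stronger conclusion that $\Gamma'$ has at least two colors in \emph{each} class, not merely four colors in total; the paper's route requires less table-checking, since it reuses Lemma~\ref{lem2}(i) instead of verifying the sign-dependent entries for an over arc colored in $\{\sigma\tau,\tau\sigma\}$, which your argument does need (and which you correctly flag and verify). Both arguments rest on Lemma~\ref{lem1} for the invariance of the classes under Reidemeister moves, and both treat the geometric step ``always over at mutual crossings implies split'' at the same level of rigor, via the standard height-function argument that you spell out.
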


\begin{proof}
Suppose that $D$ admits an $(S_3,4)$-coloring or an $(S_3,5)$-coloring, say $\Gamma$. 
Since $L$ has at least two components by Lemma~\ref{lem2}(ii), one of which is colored by $\Gamma$ with $\{\sigma,\tau,\sigma\tau\sigma\}$, and the other is colored by $\Gamma$ with $\{\sigma\tau,\tau\sigma\}$ by Lemma~\ref{lem1}. 

\noindent
(i) 
Suppose for a contradiction that $\Gamma$ uses only one color, say $\gamma$, from $\{\sigma\tau,\tau\sigma\}$. 
Then, by Table~\ref{table1}, the arc colored by $\gamma$ is constantly an over arc, or an under arc at the crossing with the over arc colored by $\gamma$. 
This means that the component is splittable, and it contradicts that $L$ is non-splittable. 
Thus the set of colors of $\Gamma$ contains at least $2$ colors from $\{\sigma,\tau,\sigma\tau\sigma\}$ and $2$ colors from $\{\sigma\tau,\tau\sigma\}$. 

\noindent
(ii)
Let $\Gamma'$ be the $S_3$-coloring induced from $\Gamma$ on a diagram of $L$ obtained by Reidemeister moves from $D$. 
Then, by Lemma~\ref{lem1}, such sets of colors on the components are unchanged by Reidemeister moves, and so, $\Gamma'$ has at least one color in $\{\sigma,\tau,\sigma\tau\sigma\}$ and one color in $\{\sigma\tau,\tau\sigma\}$. 
Moreover, since $L$ is non-splittable, there exists at least one crossing where the pair of the colors above appear. 
Then, by Table~\ref{table1}, there has to be one more color at the crossing. 
Thus $\Gamma'$ uses at least 3 colors with one color in $\{\sigma,\tau,\sigma\tau\sigma\}$ and one color in $\{\sigma\tau,\tau\sigma\}$. 
It follows from Lemma~\ref{lem2}, together with above, the coloring $\Gamma'$ is not an $(S_3,3)$-coloring. 
Therefore, if $D$ admits an $(S_3,4)$-coloring or an $(S_3,5)$-coloring, then any $S_3$-coloring on a diagram of $L$ obtained by Reidemeister moves from $D$ with the coloring has at least $4$ colors. 
\end{proof}

Now we give a proof of Proposition~\ref{prop1}. 

\begin{proof}[Proof of Proposition~\ref{prop1}]
Let $L$ be an $(S_3,4)$-colorable link and $D$ a diagram of $L$ with an $(S_3,4)$-coloring $\Gamma$. 

If $L$ is non-splittable, then there exist $2$ colors in $\{\sigma,\tau,\sigma\tau\sigma\}$ and $2$ colors in $\{\sigma\tau,\tau\sigma\}$ on $D$ from Lemma~\ref{lem} (i).
Let $\alpha\in\{\sigma,\tau,\sigma\tau\sigma\}$ be the color which $\Gamma$ does not use. 
Consider an arc on $D$ colored by $\beta,\gamma\in\{\sigma,\tau,\sigma\tau\sigma\}$ with $\beta,\gamma\neq\alpha$. 
Then one can deform $D$ and $\Gamma$ to a diagram with a coloring so that $\alpha$ appears by using Reidemeister move II repeatedly, as illustrated in Figure~\ref{figprop1}. 
Then the coloring so obtained uses five colors by Lemma~\ref{lem} (ii).

\begin{figure}[htb]
\centering
\includegraphics[width=.5\textwidth]{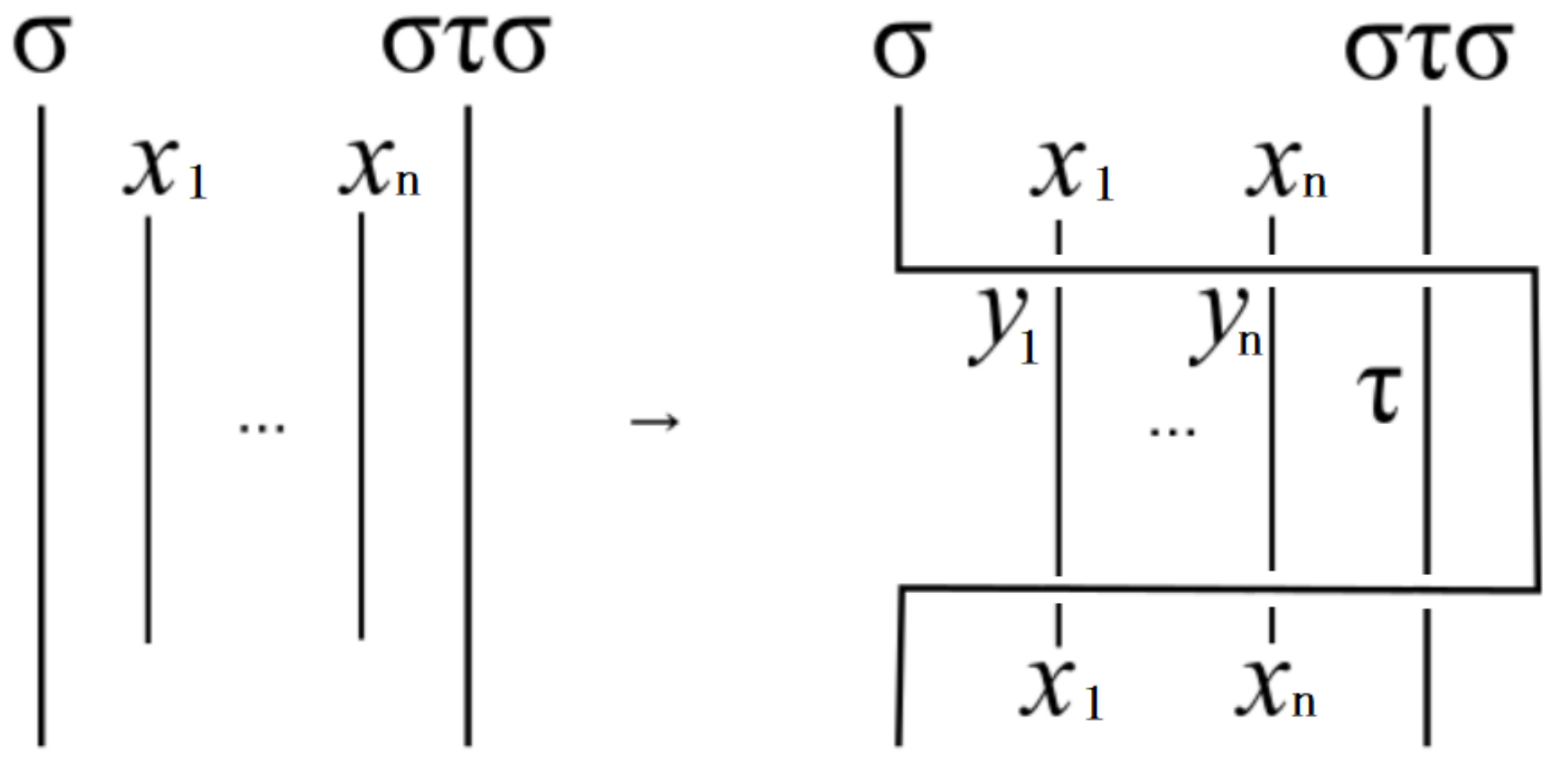}
\caption{Making $\tau$ appear from $\{\sigma,\sigma\tau\sigma,\sigma\tau,\tau\sigma\}$}
\label{figprop1}
\end{figure}

When $L$ is splittable, we also have to consider the case that there exists $3$ colors in $\{\sigma,\tau,\sigma\tau\sigma\}$ and $1$ color in $\{\sigma\tau,\tau\sigma\}$ on $D$. 
In this case, let $\alpha\in\{\sigma\tau,\tau\sigma\}$ be a color which $\Gamma$ does not use. 
On the other hand, $D$ contains an arc colored by $\beta\in\{\sigma\tau,\tau\sigma\}$ with $\beta\neq\alpha$. 
Then one can deform $D$ with the coloring to a diagram with a coloring with $\alpha$ by using Reidemeister move II repeatedly, as illustrated in Figure~\ref{prop2}.
Then the coloring so obtained uses five colors by Lemma~\ref{lem} (ii).

\begin{figure}[htb]
\centering
\includegraphics[width=.5\textwidth]{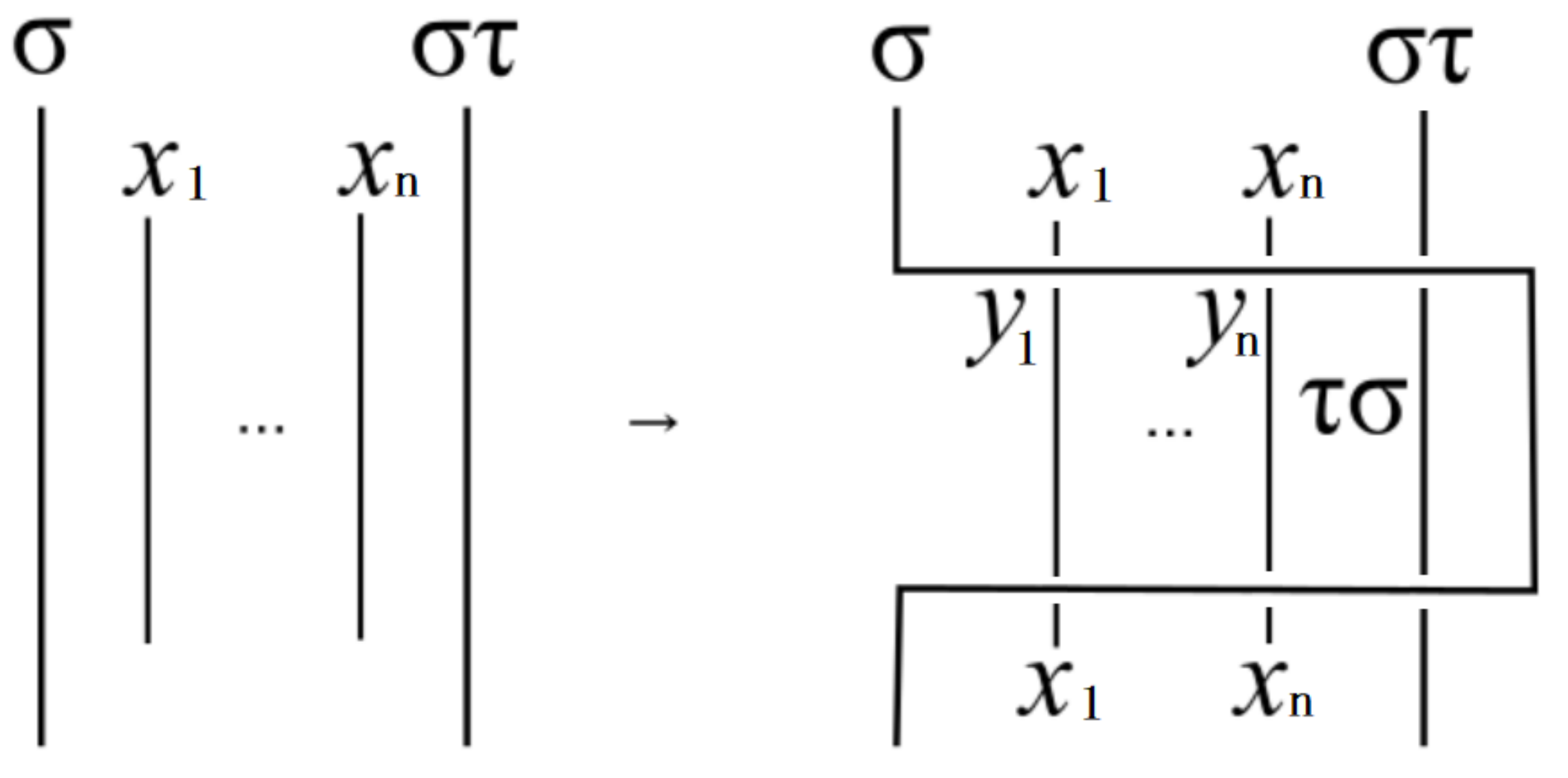}
\caption{Making $\tau\sigma$ appear from $\{\sigma,\tau,\sigma\tau\sigma,\sigma\tau\}$}
\label{prop2}
\end{figure}

\end{proof}

\section{Proof of Theorem~\ref{main}}

In this section, we give a proof of Theorem~\ref{main}. 
Recall that it is known that a 2-bridge link always has a Conway diagram $C(2a_1, 2b_1,\dots, 2b_m, 2a_{m+1})$ depicted in Figure~\ref{general}. 
See \cite[Chapter 2]{Kawauchi} about the 2-bridge links and the Conway diagrams (called "Conway's normal form" in the book) for example. 
In the follwing, we always assume that $a_i \ne 0$ and $b_j\ne 0$. 

\begin{figure}[htb]
\centering
\includegraphics[width=.6\textwidth]{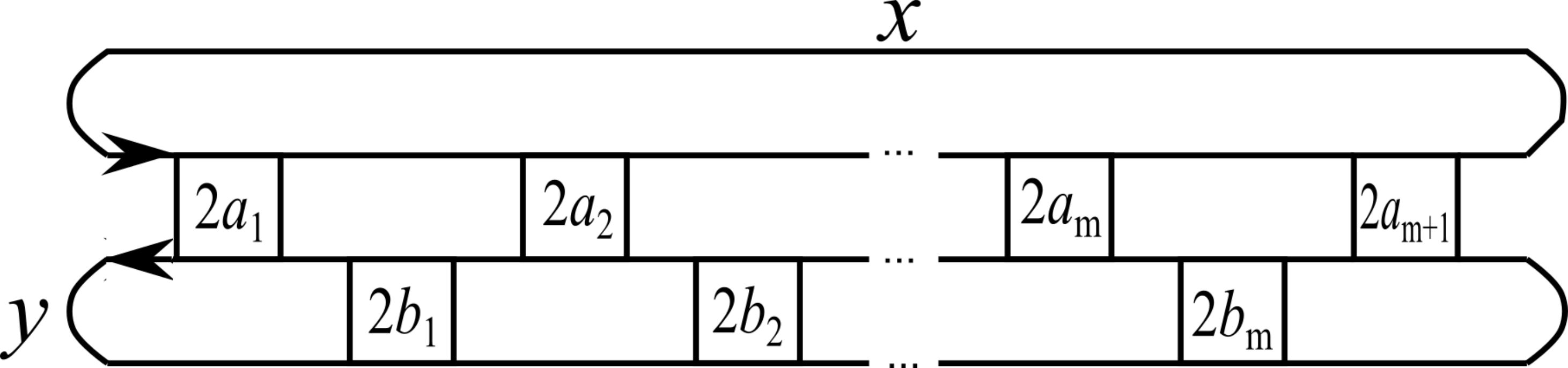}
\caption{a Conway diagram $C(2a_1, 2b_1, \dots, 2b_m, 2a_{m+1})$}
\label{general}
\end{figure}

We first show the following lemma. 

\begin{lemma}\label{lem:conway}
The Conway diagram $C(2a_1, 2b_1, 2a_2, 2b_2,\dots, 2b_m, 2a_{m+1})$ of a 2-bridge link $L$ admits an $(S_3,4)$-coloring if $\sum_{i=1}^{m+1}|a_i|\equiv 0\pmod 2$ holds for the diagram. 
\end{lemma}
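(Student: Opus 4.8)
The plan is to build an explicit $S_3$-coloring on the Conway diagram $C(2a_1,2b_1,\dots,2b_m,2a_{m+1})$ itself, rather than passing to another diagram. Since every coefficient is even and the form begins and ends with an $a$-type region (so the associated continued fraction has odd length and even numerator), $L$ is a non-splittable $2$-bridge link with exactly two components, say $K_1$ and $K_2$. By Lemma~\ref{lem1} each component must be colored entirely within the transpositions $\{\sigma,\tau,\sigma\tau\sigma\}$ or entirely within the $3$-cycles $\{\sigma\tau,\tau\sigma\}$, and by Lemma~\ref{lem}(i) a four-color coloring of a non-splittable link uses both $3$-cycles together with exactly two of the three transpositions. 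Accordingly I will try to color one component by $\{\sigma\tau,\tau\sigma\}$ and the other by two transpositions, and then determine for which diagrams this can be done consistently.

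The computational engine is the conjugation form of the crossing rule noted just after Lemma~\ref{lem1}: at a positive crossing the outgoing under-color equals $x^{-1}zx$, where $x$ is the over-color (and $xzx^{-1}$ at a negative crossing). I would record the three consequences that govern the propagation: conjugating a transposition by a $3$-cycle permutes the three transpositions cyclically; conjugating a $3$-cycle by a transposition interchanges $\sigma\tau$ and $\tau\sigma$; and conjugating a $3$-cycle by a $3$-cycle fixes it, because $\{e,\sigma\tau,\tau\sigma\}$ is an abelian subgroup. The effect of passing a coloring through one twist region is then entirely determined by these rules together with the over/under pattern of that region.

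Next I would analyze the twist regions of Figure~\ref{general} one at a time. The torus-link case $C(2a_1)$ already supplies the model computation: there the two components twist around each other, the coloring is forced to toggle the $3$-cycle color $\sigma\tau\leftrightarrow\tau\sigma$ exactly $a_1$ times while the transposition color merely oscillates between two fixed transpositions, so the diagram closes up if and only if $a_1$ is even. For the general form I would show that each $2a_i$-region is inter-component and behaves in the same way, contributing a net toggle of parity $a_i$ to the $3$-cycle color and never producing a third transposition, whereas each $2b_j$-region contributes an even number of toggles and is therefore inert modulo $2$. Composing these effects along the diagram then shows that the total net toggle of the $3$-cycle color is $\sum_{i=1}^{m+1}a_i \pmod 2$ and that only two transpositions ever occur.

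Finally I would impose the conditions that the colors agree where the arcs close up along the two bridges. These hold if and only if the net toggle vanishes, i.e. $\sum_{i=1}^{m+1}a_i\equiv 0\pmod 2$, equivalently $\sum_{i=1}^{m+1}|a_i|\equiv 0\pmod 2$; under this hypothesis the assignment is a genuine $S_3$-coloring. Because $L$ is non-splittable there is at least one inter-component crossing, so both $3$-cycles and both chosen transpositions actually appear, and the coloring uses exactly four colors. I expect the main obstacle to be precisely the bookkeeping of the third paragraph: identifying, in the alternating form $C(2a_1,2b_1,\dots,2b_m,2a_{m+1})$, how the two components thread through the successive twist regions, and then verifying cleanly that the $2b_j$-regions are inert modulo $2$ while each $2a_i$-region contributes a toggle of parity $a_i$ and never forces the third transposition to appear. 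This asymmetry between the $a_i$- and $b_j$-regions is exactly what isolates the sum over the $a_i$ alone and pins the color count to $4$ rather than $5$.
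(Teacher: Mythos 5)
Your plan is essentially the paper's: build the coloring directly on the Conway diagram, put the $3$-cycles on one component and two transpositions on the other, and track how each twist region changes the pair of colors; your torus-link model computation and the resulting closure criterion $\sum_i a_i\equiv 0\pmod 2$ are correct. However, the step you set aside as ``bookkeeping'' is the actual substance of the proof, and the fallback reason you offer for it is unsound. A $2b_j$-region is not inert ``because it contributes an even number of toggles'': in a twist region whose two strands carry a transposition and a $3$-cycle, the strands alternate over/under, so among $2b_j$ crossings the $3$-cycle color is conjugated by a transposition exactly $b_j$ times, i.e.\ it toggles with parity $b_j$. If the $b_j$-regions were of this mixed type, an odd $b_j$ would flip the pair and your criterion would wrongly involve $\sum_j b_j$ as well. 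What actually makes the $b_j$-regions harmless is structural, not numerical: in the all-even form $C(2a_1,2b_1,\dots,2b_m,2a_{m+1})$, one component runs along the top and meets only the $a_i$-regions, while the other component supplies \emph{both} strands of every $b_j$-region. This also forces the color-class assignment to go the right way around: the $3$-cycles must be placed on the component containing the $b_j$-regions, so that every crossing there is a $3$-cycle conjugating a $3$-cycle and the colors are literally constant (this is exactly what the paper's proof records in its figures). With the opposite assignment, a $b_j$-region whose two strands carry distinct transpositions creates $\sigma\tau\sigma$, and four colors is lost.

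To close the gap, first prove the threading claim; since every twist region has an even number of crossings, each strand exits a region on the same side it entered, so one can trace the component containing the top-left arc and see that it passes only through the $a_i$-regions, while the other component doubles back through each $b_j$-region. Once that is in place, your argument runs exactly as in the paper: in each $a_i$-region the pair toggles between $\{\sigma,\sigma\tau\}$ and $\{\tau,\tau\sigma\}$ once per full twist (the lock-step you verified for the torus link, which is also what prevents $\sigma\tau\sigma$ from ever appearing there), the $b_j$-regions change nothing, and the diagram closes up if and only if $\sum_i a_i\equiv 0\pmod 2$. One last small point: a single inter-component crossing exhibits only three colors, so your final sentence needs a full twist to produce the fourth color; this is guaranteed because each $a_i\neq 0$ supplies at least two crossings.
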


Note that the last congruent equation is equivalent to that the linking number of the two components of a two-bridge link is even. 

\begin{proof}[Proof of Lemma~\ref{lem:conway}]
We try to construct an $(S_3,4)$-coloring on a Conway diagram $C(2a_1, 2b_1, 2a_2, 2b_2,\dots, 2b_m, 2a_{m+1})$ from the left end of the diagram. 

We fix colors on arcs $x,y$ in Figure~\ref{general} as $\sigma$ and $\sigma\tau$ respectively. 
Then, let us try to make a coloring by setting the color on the arc next to the right of a colored arc by using Table~\ref{table1}.
Repeatedly perform this procedure from left to right.

First we see the colors in the twist regions with $2 a_i$ crossings $( 1 \le i \le m+1)$. 
Since $2a_i$ is even,  pairs of colors at before and after $2a_i$ crossings are the same or another color pair. 
Precisely, if $a_i$ is even, the pairs of colors before and after $2a_i$ crossings are coincide. 
If $a_i$ is odd, the pairs of colors before and after $2a_i$ crossings are distinct, but in a fixed pattern. 
For example, if a pair of colors $\{\sigma,\sigma\tau\}$ appears before the twist, then the pairs of colors on the parallel arcs during the twist are $\{\sigma,\sigma\tau\}$ or $\{\tau,\tau\sigma\}$ alternately as illustrated in Figure \ref{pr2}. 
In particular, during the twists, only 4 colors can appear. 

\begin{figure}[htb]
\centering
\includegraphics[width=.6\textwidth]{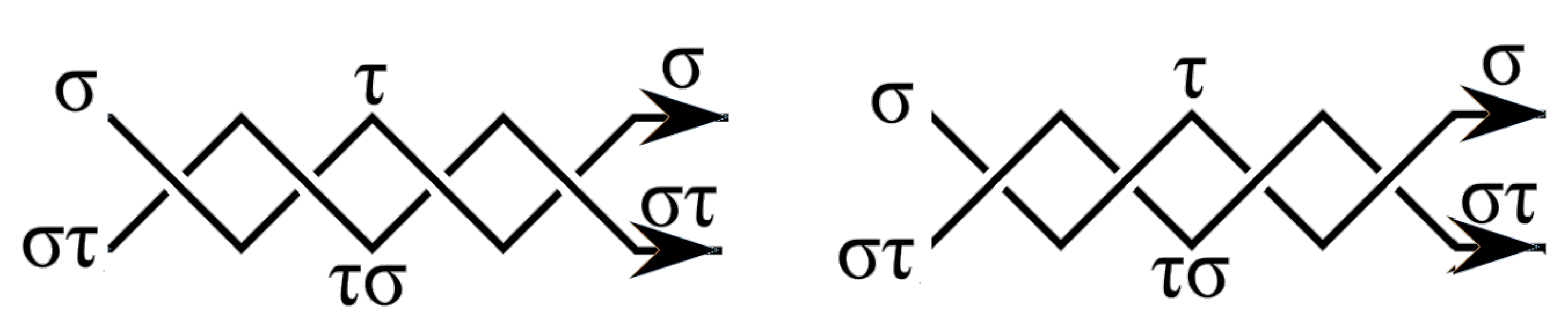}
\caption{colors in the twist with $2a_i$ crossings}
\label{pr2}
\end{figure}

Next we see the colors in the twist regions with $2 b_j$ crossings $( 1 \le j \le m)$. 
On the arcs in the twist with $2 b_j$ crossings, just two colors $\sigma\tau,\tau\sigma$ appear. 
Moreover, the colors on the parallel arcs before and after the twisting are the same. 
See Figure \ref{pr1}.
\begin{figure}[htb]
\centering
\includegraphics[width=.2\textwidth]{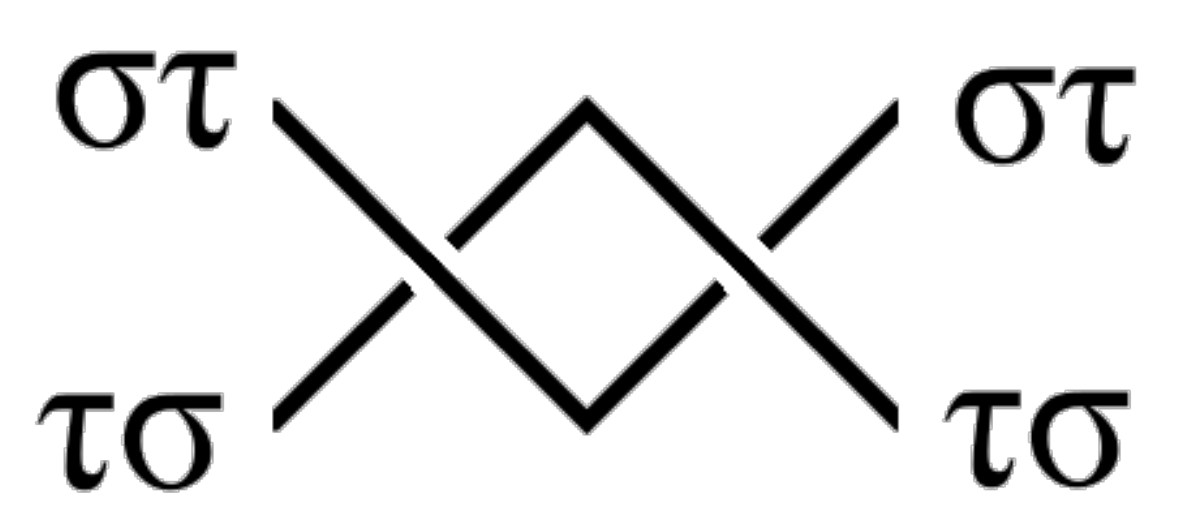}
\caption{colors in the twist with $2b_i$ crossings}
\label{pr1}
\end{figure}

From this procedure, checking the right-end of the diagram, we can obtain an $S_3$-coloring on the diagram if and only if $\sum_{i=1}^{m+1}|a_i|\equiv 0\pmod 2$ holds. 
By the construction, the coloring so obtained uses only 4 colors. 
\end{proof}

\begin{proof}[Proof of Theorem~\ref{main}]
Let $L$ be an $(S_3,5)$-colorable $2$-bridge link. 
By Reidemeister moves, we deform a diagram $D$ of $L$ with an $(S_3,5)$-coloring to a Conway diagram $D_C=C(2a_1, 2b_1, \dots, 2b_m, 2a_{m+1})$ as shown in Figure~\ref{general} with the induced $S_3$-coloring $\Gamma$. 
By Lemma~\ref{lem} (i) and (ii), the coloring $\Gamma$ uses at least $2$ colors from $\{\sigma,\tau,\sigma\tau\sigma\}$ and $2$ colors from $\{\sigma\tau,\tau\sigma\}$. 
Moreover, by Lemma~\ref{lem1}, the arcs contained in one component have the colors either from $\{\sigma,\tau,\sigma\tau\sigma\}$ or from $\{\sigma\tau,\tau\sigma\}$. 

Now we consider the colors on the arcs $x$ and $y$ in Figure~\ref{general} by $\Gamma$. 

When $\Gamma(x)\in\{\sigma,\tau,\sigma\tau\sigma\}$ and $\Gamma(y)\in\{\sigma\tau,\tau\sigma\}$, then, by retaking the colors if necessary, the coloring is completely the same as that constructed in the proof of Lemma~\ref{lem:conway}. 
That is, $\Gamma$ is an $(S_3,4)$-coloring on the diagram, and $\sum_{i=1}^{m+1}|a_i|\equiv 0\pmod 2$ must hold. 

Consider the case that $\Gamma(x)\in\{\sigma\tau,\tau\sigma\}$ and $\Gamma(y)\in\{\sigma,\tau,\sigma\tau\sigma\}$. 
Then one can deform the diagram and the coloring to $\Gamma'$ so that $\Gamma'(x) \in \{\sigma,\tau,\sigma\tau\sigma\}$ and $\Gamma'(y)\in\{\sigma\tau,\tau\sigma\}$ by Reidemeister moves. 
Precisely, it is achieved by rotating the interior part of the thin line inside-out, keeping the exterior part of the line fixed as illustrated in Figure \ref{pr3}. 
Also see \cite[Chapter 9]{Murasugi}. 

\begin{figure}[htb]
\centering
\includegraphics[width=.6\textwidth]{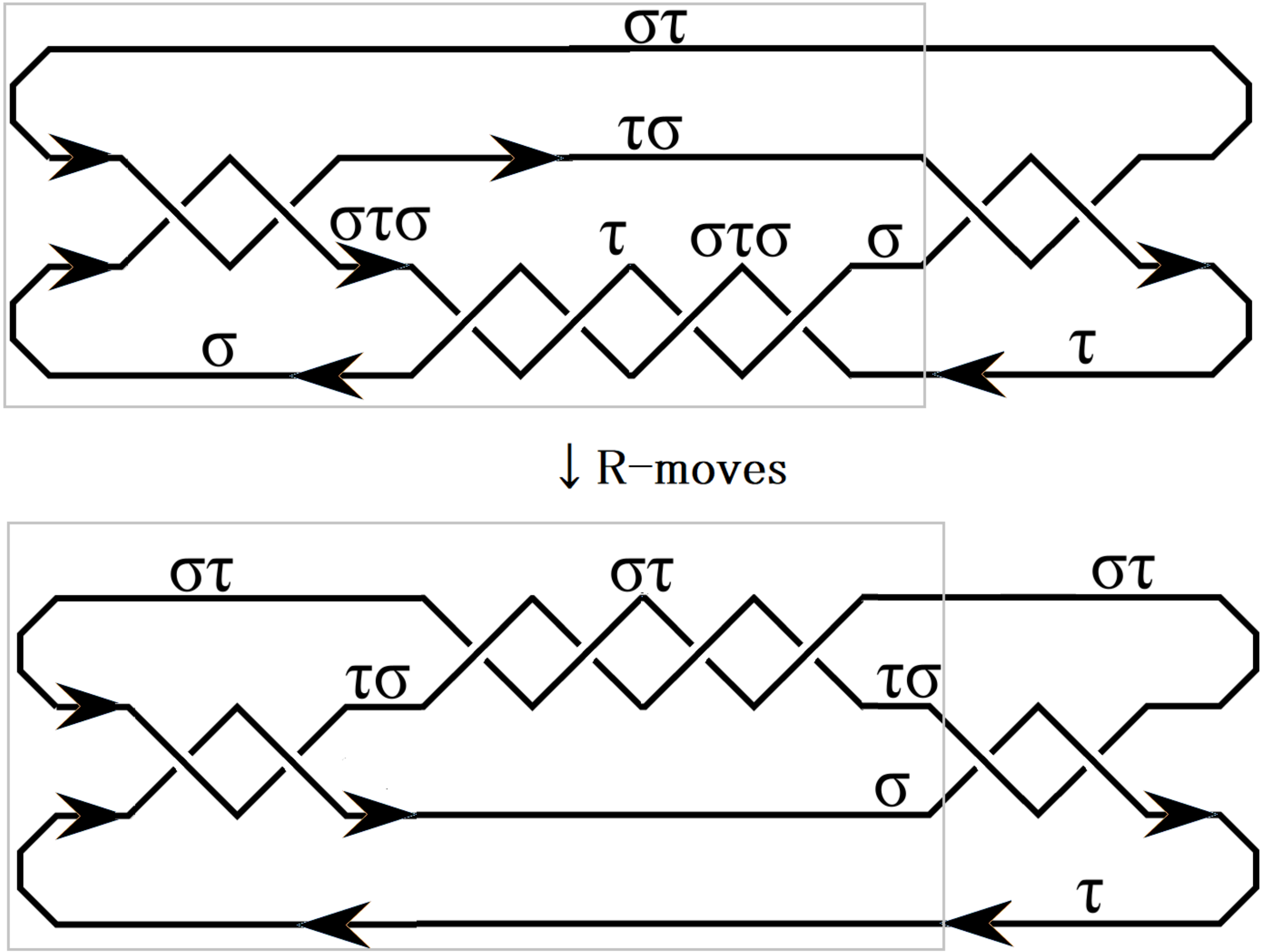}
\caption{Reidemeister moves to $\Gamma'(x)\in\{\sigma,\tau,\sigma\tau\sigma\}$}
\label{pr3}
\end{figure}

After such modifications, in the same way as the proof of Lemma~\ref{lem:conway}, we see that the condition $\displaystyle \Sigma_{i=1}^{m+1}|a_i|\equiv 0\pmod 2$ have to be satisfied, and $\Gamma'$ is an $(S_3,4)$-coloring on the diagram. 

It concludes that if a 2-bridge link $L$ is $(S_3,5)$-colorable, then $L$ has a Conway diagram $C(2a_1, 2b_1, 2a_2, 2b_2,\dots, 2b_m, 2a_{m+1})$ satisfying $\sum_{i=1}^{m+1}|a_i|\equiv 0\pmod 2$, and the diagram admits an $(S_3,4)$-coloring, i.e., the link $L$ is $(S_3,4)$-colorable.
This completes the proof of Theorem~\ref{main}.
\end{proof}

\section{Examples}

From Theorem \ref{main}, any $(S_3,5)$-colorable $2$-bridge link is $(S_3,4)$-colorable. 
Among such $(S_3,4)$-colorable links, there exists some of the links which is also $(S_3,3)$-colorable and the others are not. 
In this section, we collect some examples of $S_3$-colorings for 2-bridge links, and in particular, consider double twist links. 
One of the simplest 2-bridge links would be 2-bridge torus links, that are the torus links with only two strands. 

\begin{example}[The torus link $T(2,q)$]
By Theorem~\ref{main}, the torus link $T(2,q)$ is $(S_3,4)$-colorable if and only if $q\equiv 0 \pmod {4}$. 
The next figure depicts a torus link with $(S_3,4)$-coloring which is not $(S_3,3)$-colorable. 

\begin{figure}[htb]
\centering
\includegraphics[width=.4\textwidth]{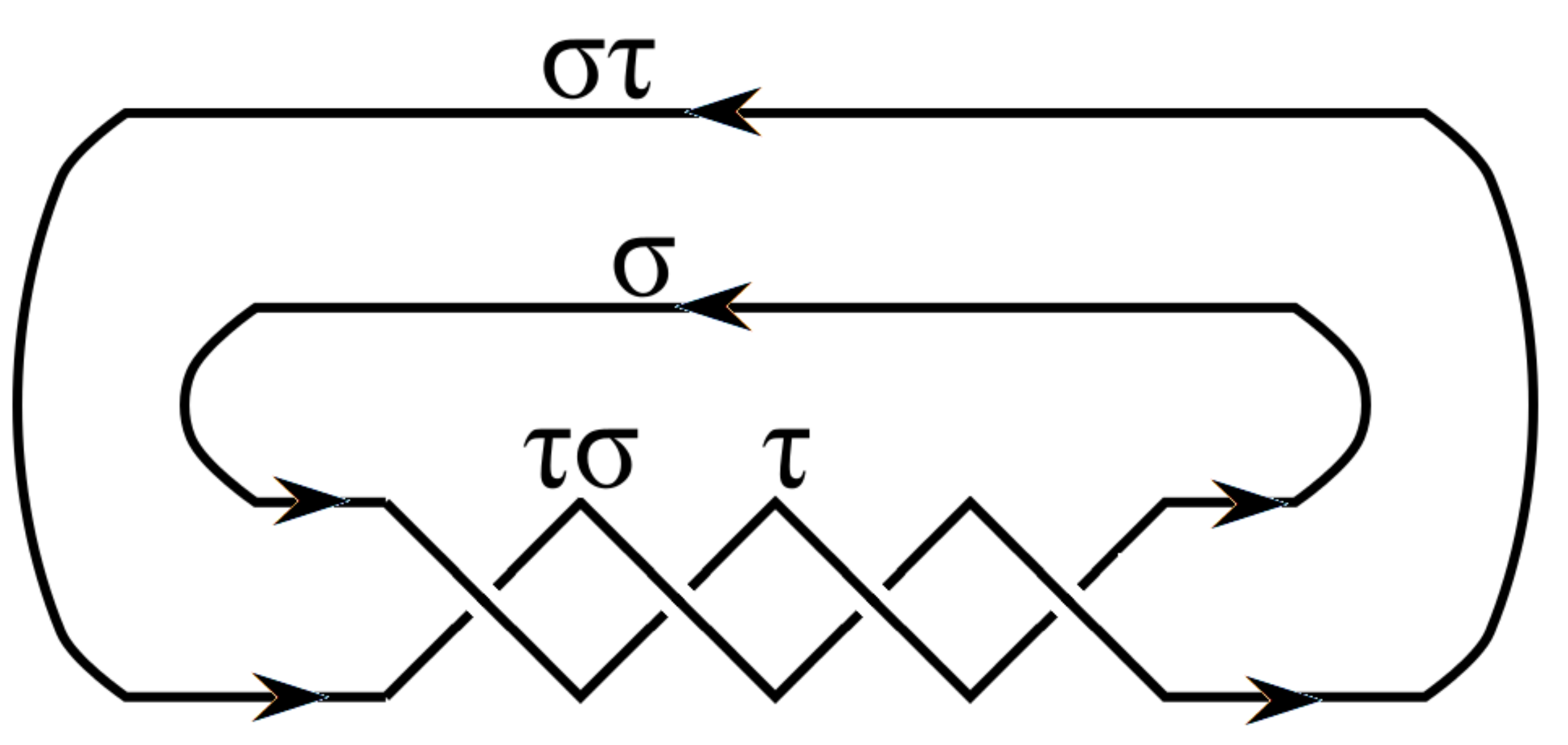}
\caption{Torus link $T(2,4)$ with an $(S_3,4)$-coloring}\label{toruslink}
\end{figure}

In fact, by using Table~\ref{table1}, one can see that the standard torus diagram of $T(2,q)$ (Figure~\ref{toruslink}) is $(S_3,4)$-colorable if and only $q\equiv 0 \pmod {4}$ and $T(2,q)$ is $(S_3,3)$-colorable if and only if $q\equiv 0 \pmod {3}$. 
See Figures~\ref{pr2} and ~\ref{3twists}.

\begin{figure}[htb]
\centering
\includegraphics[width=.4\textwidth]{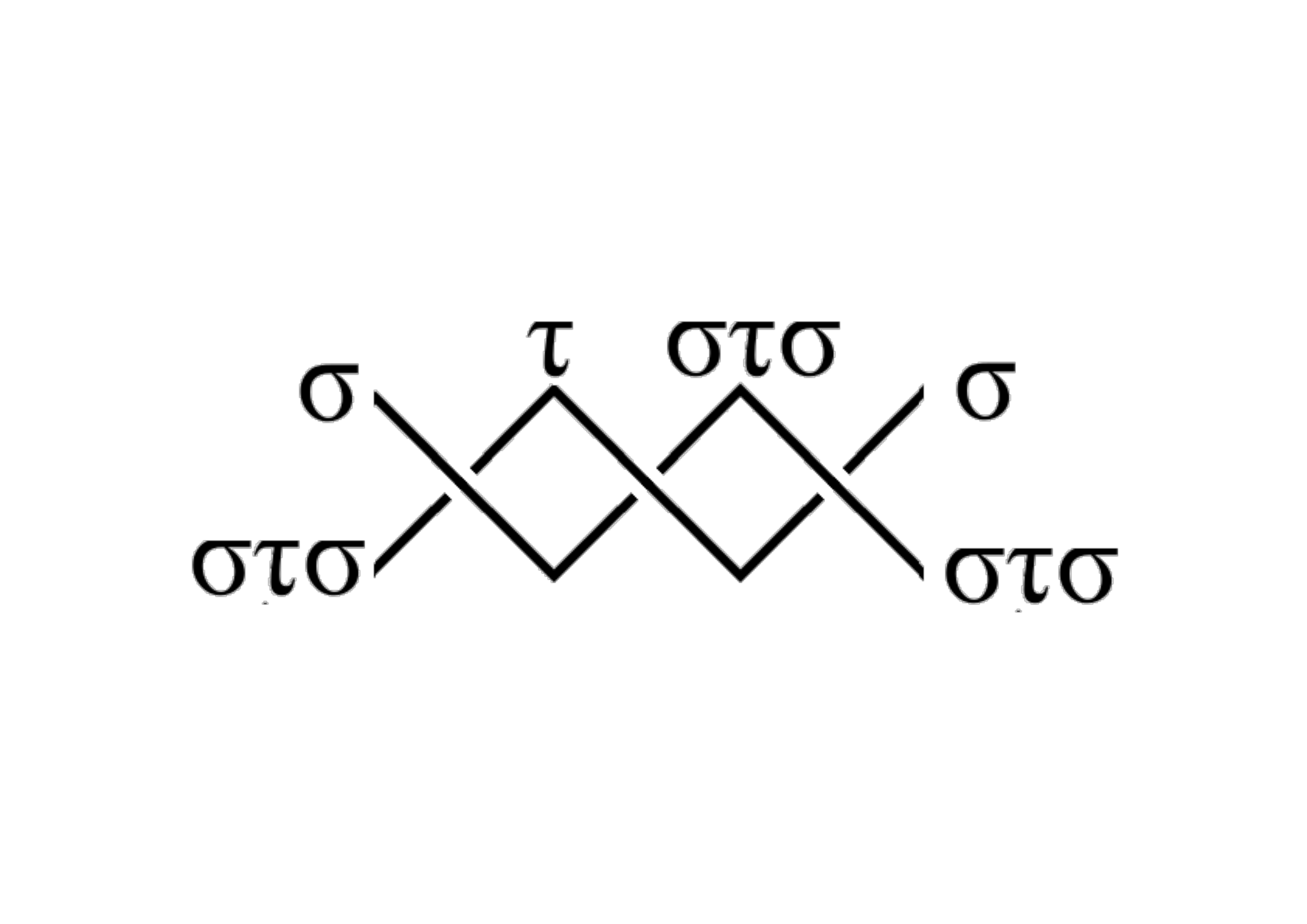}
\caption{Twists with $\{\sigma,\tau,\sigma\tau\sigma\}$}\label{3twists}
\end{figure}

Also, by \cite{KauffmanLopes}, the determinant of $T(2,q)$ is $q$, and so, $T(2,q)$ is Fox 3-colorable, equivalently, is $(S_3,3)$-colorable if and only if $q\not\equiv 0\pmod 3$. 

For example, the torus link $T(2,12)$ is $(S_3,n)$-colorable for $n=3,4,5$. 

\begin{figure}[htb]
\centering
\includegraphics[width=.8\textwidth]{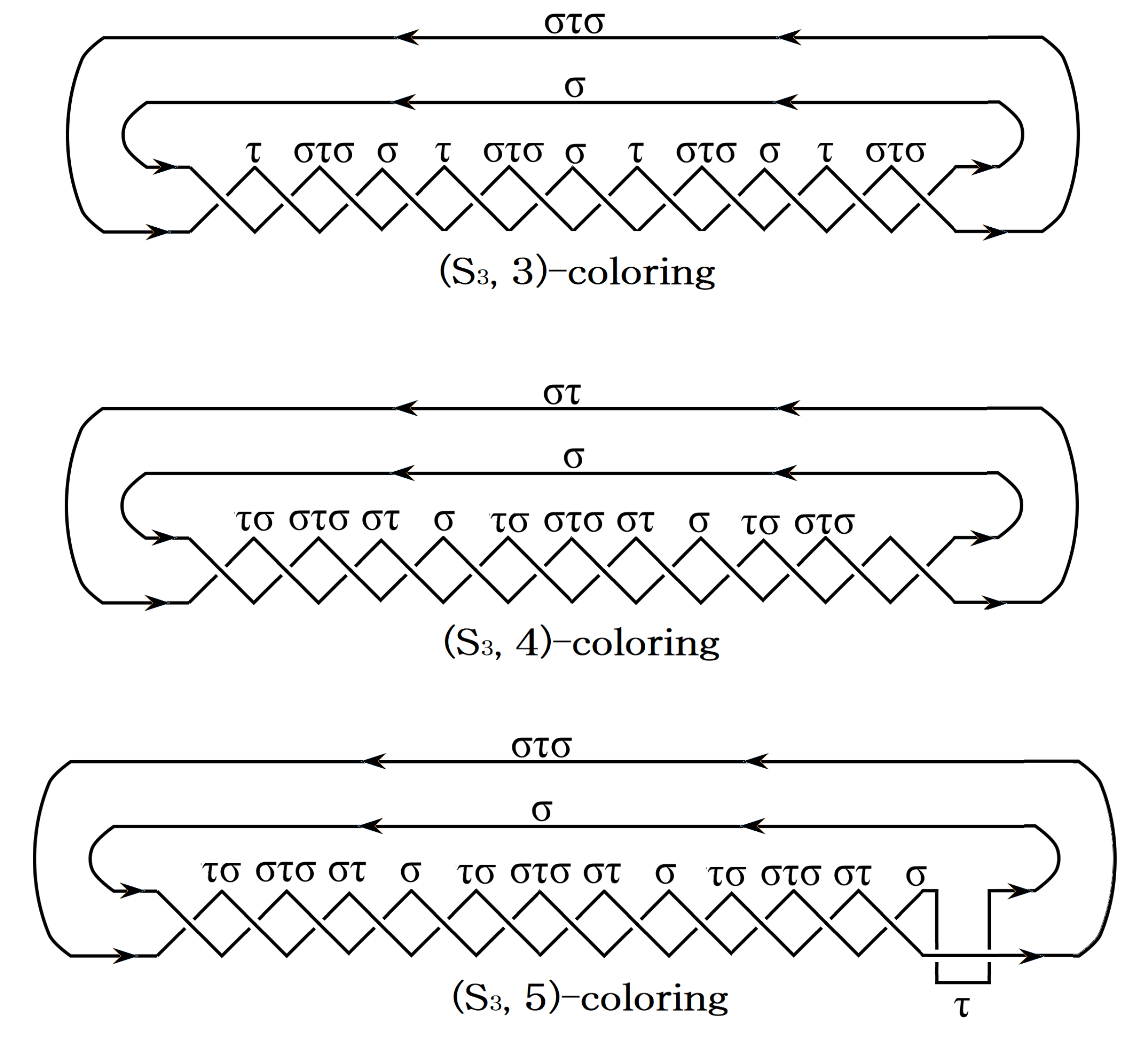}
\caption{$S_3$-colorings for $T(2,12)$}
\label{torus-examples.pdf}
\end{figure}
\end{example}

Next, we consider \textit{double-twist links}, which are the links admitting the diagrams shown in Figure~\ref{2-1}. 

\begin{figure}[htb]
\centering
\includegraphics[width=.45\textwidth]{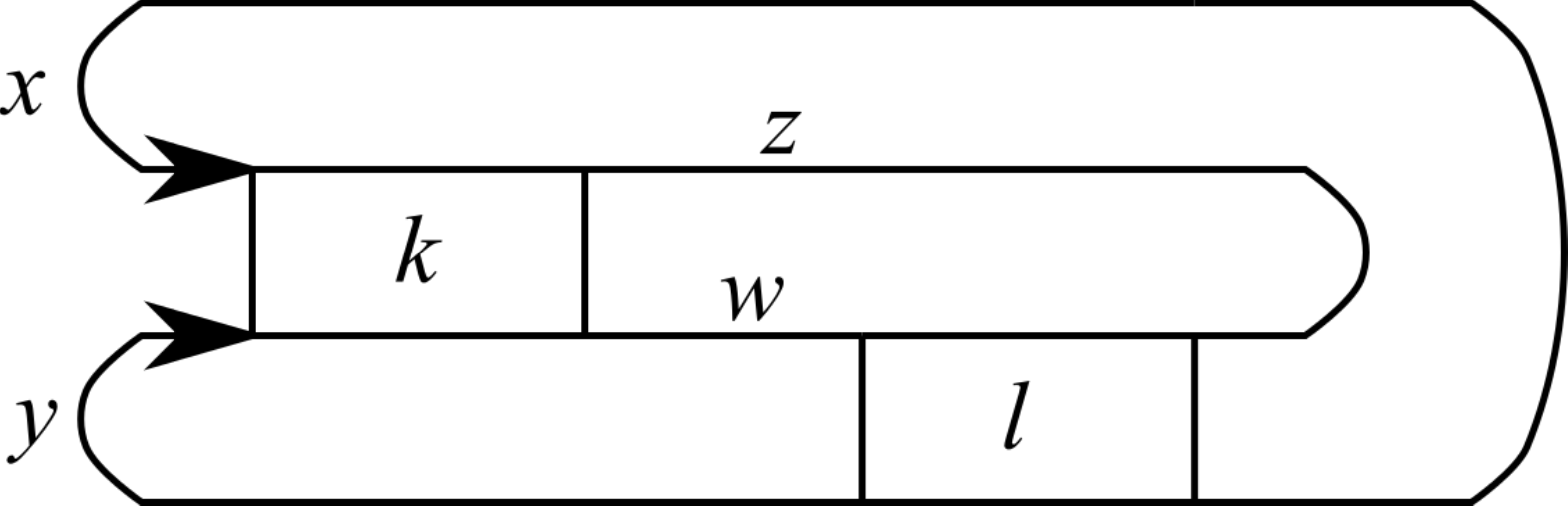}
\caption{a diagram of a double twist link $J(k,l)$}
\label{2-1}
\end{figure}

An example of the double twist link with $(S_3,4)$-coloring which is not $(S_3,3)$-colorable is depicted in Figure~\ref{J35}. 

\begin{figure}[htb]
\centering
\includegraphics[width=.6\textwidth]{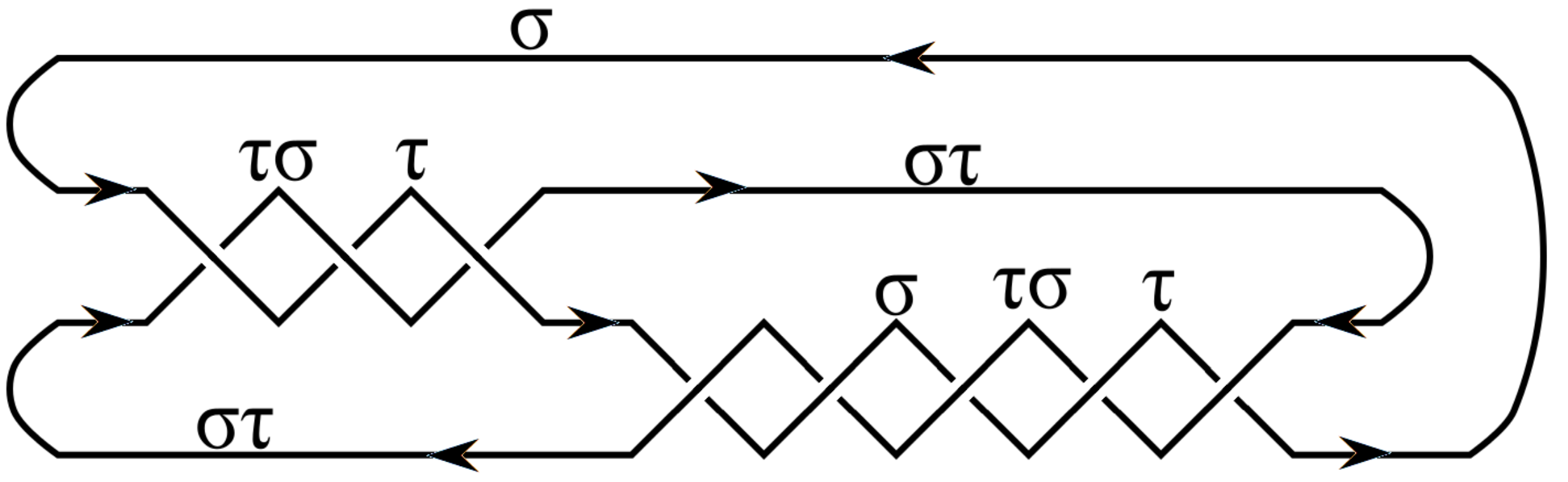}
\caption{Double twist link $J(3,5)$ with an $(S_3,4)$-coloring}
\label{J35}
\end{figure}

Actually, for double twist links, we have the following. 

\begin{proposition}
A double twist link $J(k,l)$ depicted in Figure~\ref{2-1} is $(S_3,4)$-colorable if and only if $kl\equiv3 \pmod {4}$,  and is $(S_3,3)$-colorable if and only if $kl \equiv 2\pmod {3}$.
\end{proposition}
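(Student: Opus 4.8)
Looking at the proposition about double twist links $J(k,l)$, I need to prove two biconditionals relating the coloring properties to arithmetic conditions on $k$ and $l$.

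My plan is to recognize the double twist link $J(k,l)$ as a special Conway diagram and then apply the machinery already developed. The double twist link $J(k,l)$ has two twist regions, one with $k$ crossings and one with $l$ crossings; I will first express this in the Conway normal form $C(2a_1, 2b_1, \dots)$ used in Lemma~\ref{lem:conway}. The subtlety is that the twist parameters $k$ and $l$ in Figure~\ref{2-1} need not both be even, so I must carefully match the geometric picture against the Conway form. Rather than force the Conway normal form, I would work directly with Table~\ref{table1} on the diagram in Figure~\ref{2-1}, tracking colors through each of the two twist regions. The key computational input is the behavior already recorded in Figures~\ref{pr2} and~\ref{3twists}: a twist region colored by $\{\sigma,\tau,\sigma\tau\sigma\}$ cycles through all three colors with period dividing~$3$, while a twist region colored by $\{\sigma\tau,\tau\sigma\}$ simply alternates the two colors.

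For the $(S_3,4)$-colorability claim, I would trace a candidate coloring around $J(k,l)$ using Table~\ref{table1}, assigning one twist region colors from $\{\sigma\tau,\tau\sigma\}$ and following the forced colors through the crossings. Closing up the diagram consistently imposes a parity-type congruence; I expect the count of crossings contributing to each component's self-linking to yield exactly $kl\equiv 3\pmod 4$ as the closure condition, paralleling the role of $\sum|a_i|\equiv 0 \pmod 2$ in Lemma~\ref{lem:conway}. By Theorem~\ref{main}, it suffices to determine $(S_3,4)$-colorability, since this coincides with $(S_3,5)$-colorability; and by Lemma~\ref{lem}, any such coloring genuinely uses four colors, so I only need to check that a nontrivial $S_3$-coloring of the required type exists, equivalently that the closure equation is solvable.

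For the $(S_3,3)$-colorability claim, by Lemma~\ref{lem2}(i) a nonsplittable $(S_3,3)$-coloring must use exactly the set $\{\sigma,\tau,\sigma\tau\sigma\}$, which corresponds to a Fox $3$-coloring. So I would compute when $J(k,l)$ is Fox $3$-colorable, i.e.\ when its determinant is divisible by $3$. The determinant of $J(k,l)$ is a standard computation (a small continued-fraction or Goeritz-matrix evaluation giving $kl+1$ up to sign), and I expect $3 \mid \det J(k,l)$ to reduce precisely to $kl\equiv 2\pmod 3$. Alternatively, and more in the spirit of the paper, I would track a $\{\sigma,\tau,\sigma\tau\sigma\}$-coloring through both twist regions using the period-$3$ behavior of Figure~\ref{3twists} and read off the closure condition directly.

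\textbf{Main obstacle.} The hardest part will be the bookkeeping in the closure conditions: correctly identifying which twist region of $J(k,l)$ receives colors from $\{\sigma,\tau,\sigma\tau\sigma\}$ versus $\{\sigma\tau,\tau\sigma\}$, and handling the parity of $k$ and $l$ separately so that the color-propagation through a region with an \emph{odd} number of crossings swaps color pairs (as in Figure~\ref{pr2}) while an even number preserves them. Getting the modular arithmetic to collapse to the clean forms $kl\equiv 3\pmod 4$ and $kl\equiv 2\pmod 3$—rather than a case split on the individual residues of $k$ and $l$—is where the real care is needed; I anticipate this requires examining the four parity cases of $(k,l)$ and verifying the product condition captures all of them uniformly.
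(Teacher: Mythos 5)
Your proposal follows essentially the same route as the paper: the authors likewise bypass Conway normal form and trace colors directly through the two twist regions of Figure~\ref{2-1} via Table~\ref{table1} (the period-four mixed-pair behavior of Figure~\ref{pr2} producing the closure condition $kl\equiv 3\pmod 4$), and they obtain the $(S_3,3)$ statement exactly as you suggest, from the determinant $\det J(k,l)=1+kl$ of \cite{KauffmanLopes} together with the equivalence of $(S_3,3)$-colorability and Fox $3$-colorability. One bookkeeping correction you would discover when carrying out the tracing: in the relevant case both $k$ and $l$ are odd (otherwise $J(k,l)$ is a knot), so each twist region is traversed by both components and no region is colored purely from $\{\sigma\tau,\tau\sigma\}$ --- every region exhibits the mixed-pair alternation of Figure~\ref{pr2}, which is precisely where the modulus $4$ comes from.
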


\begin{proof}
To see which $J(k,l)$ is $(S_3,4)$-colorable, we need to consider Conway diagrams to apply Theorem~\ref{main}, but here, we directly consider the diagram $D$ of $J(k,l)$ shown in Figure~\ref{2-1}. 

First we show that $D$ is $(S_3,4)$-colorable if $kl\equiv3 \pmod {4}$. 
We set colors $\Gamma(x)$, $\Gamma(y)$ of arcs $x,y$ on Figure~\ref{2-1} as $\Gamma(x)=\sigma,\Gamma(y)=\sigma\tau$. 
Then the pair of colors $(\Gamma(z),\Gamma(w))$ on arcs $(z,w)$ is fixed as $(\tau\sigma,\sigma)$ with $k\equiv1\pmod 4$, or $(\sigma\tau,\tau)$ with $k\equiv3\pmod 4$ to make a coloring on $D$ by Table~\ref{table1}. 
For the case of $k\equiv1\pmod {4}$, $l\equiv3\pmod{4}$ also holds, and so $D$ is $S_3$-colorable as $(\Gamma(x),\Gamma(y),\Gamma(z),\Gamma(w))=(\sigma,\sigma\tau,\tau\sigma,\sigma)$. 
See Figure~\ref{2-2}. 
Note that $\sigma\tau\sigma$ does not appear during the twists, that is, the coloring is an $(S_3,4)$-coloring. 

\begin{figure}[htb]
\centering
\includegraphics[width=.45\textwidth]{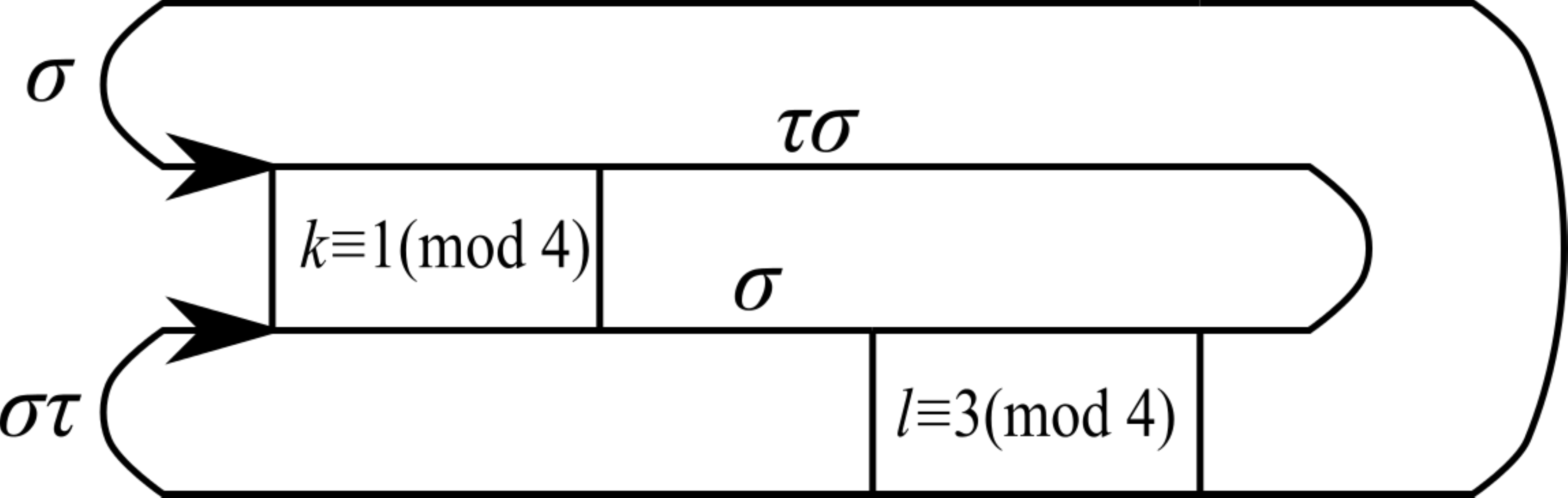}
\caption{a diagram of a double twist link $J(k,l)$}
\label{2-2}
\end{figure}

In the same way, in the case of $k\equiv3\pmod{4}$, $D$ is shown to be $(S_3,4)$-colorable. 

Conversely, suppose that $J(k,l)$ is $(S_3,4)$-colorable. 
In the same argument as the proof of Theorem~\ref{main}, the digram $D$ of $J(k,l)$ admits a $S_3$-coloring such that the arcs contained in one component are all colored by either of $\{\sigma,\tau,\sigma\tau\sigma\}$ or $\{\sigma\tau,\tau\sigma\}$. 
Then, as above, by seeing the colors on the arcs from the left end, one can check that the condition $kl\equiv3 \pmod {4}$ is necessary. 

For $(S_3,3)$-colorability, again, by \cite{KauffmanLopes}, the determinant of $J(k,l)$ is shown to be $1+kl$, and so, $J(k,l)$ is Fox 3-colorable, equivalently, is $(S_3,3)$-colorable if and only if $kl \equiv 2\pmod {3}$. 
\end{proof}

\bigskip

\noindent
\textbf{Acknowledgements.}
The authors would like to thank to Masaaki Suzuki for useful discussions. 
Also they thank to anonymous referee of the previous submission.

\end{document}